\newtheorem{theo}{Theorem}[section]
\newtheorem{coro}{Corollary}[theo]
\newtheorem{remark}{Remark}[section]
\newcommand{\dsp}{\displaystyle}
\newcommand{\T}{\mathbb{T}}
\newcommand{\Pset}{\mathbb{P}}
\newcommand{\Fi}{u}
\newcommand{\xt}{x}
\newcommand{\ys}{y}
\newcommand{\Fii}{\Phi}
\newcommand{\Muii}{\mu}
\renewcommand{\Re}{\mathop{\rm Re}}
\renewcommand{\Im}{\mathop{\rm Im}}
\newcommand{\const}{\mathop{\rm const}}
\def\Xint#1{\mathchoice
   {\XXint\displaystyle\textstyle{#1}}%
   {\XXint\textstyle\scriptstyle{#1}}%
   {\XXint\scriptstyle\scriptscriptstyle{#1}}%
   {\XXint\scriptscriptstyle\scriptscriptstyle{#1}}%
   \!\int}
\def\XXint#1#2#3{{\setbox0=\hbox{$#1{#2#3}{\int}$}
     \vcenter{\hbox{$#2#3$}}\kern-.5\wd0}}
\def\dashint{\Xint-}
\begin{document}
\setlength{\parskip}{0.0cm}

\title{{$R_{II}$ type recurrence, generalized eigenvalue problem and orthogonal polynomials on the unit circle}\thanks{The second author's  work was supported by funds from CNPq, Brazil (grants 475502/2013-2 and 305073/2014-1). }}
\author{{M.E.H. Ismail$^{1}$ and A. Sri Ranga$^{2}$} \\[1.5ex]
{\small $^{1}$Department of Mathematics, University of Central Florida} \\
{\small Orlando, FL 32816, USA} \\[1ex]
{\small $^{2}$DMAp, IBILCE, UNESP - Universidade Estadual Paulista} \\
{\small 15054-000  S\~{a}o Jos\'{e} do Rio Preto,  SP,  Brazil} \\
{\hspace*{3cm}}\\ }
\date{\today}
\maketitle \thispagestyle{empty}

%%%%%%%%%%%%%%%%%%%%%%%%%%%%%%%%%%%%%%%%%%%%%%%%%%%%%%%%%%%%%%%%

\begin{abstract} 

We consider a sequence of polynomials $\{P_n\}_{n \geq 0}$ satisfying a special $R_{II}$ type recurrence relation  where the zeros of $P_n$ are simple and lie on the real line. It turns out  that the polynomial $P_n$, for any $n \geq 2$, is the characteristic polynomial of a simple $n \times n$ generalized eigenvalue problem. It is shown that with this $R_{II}$ type recurrence relation one can always associate a positive measure on the unit circle.   The orthogonality property satisfied by $P_n$ with respect to this measure is also obtained.  Finally, examples are given to justify the results.       
\end{abstract}

%%%%%%%%%%%%%%%%%%%%%%%%%%%%%%%%%%%%%%%%%%%%%%%%%%%%%%%%%%%%%%%%

\vspace{1ex}

Mathematics Subject classification. {15A18, 58C40, 42C05} \\[-1ex]

Keywords. {Generalized eigenvalue problem, Orthogonal polynomials on the unit circle.}

%%%%%%%%%%%%%%%%%%%%%%%%%%%%%%%%%%%%%%%%%%%%%%%%%%%%%%%%%%%%%%%
\setcounter{equation}{0}
\section{Introduction}

Recurrence relations of the form 
\[
    P_{n+1}(z) = \sigma_{n+1} (z -v_{n+1}) P_{n}(z) - u_{n+1}(z-a_{n})(z-b_{n}) P_{n-1}(z),  \quad n \geq 1, 
\]
with initial conditions $P_0(z) =  1$ and $P_1(z) = \sigma_1(z - v_1)$, have been studied by Ismail and Masson \cite{Ismail-Masson-JAT1995}. In \cite{Ismail-Masson-JAT1995}  these recurrence relations  were referred to as those associated with  continued fractions of $R_{II}$ type. One of the interesting results shown in \cite{Ismail-Masson-JAT1995} is that given such a recurrence relation, if $u_{n+1} \neq 0$, $P_{n}(a_{k}) \neq 0$ and $P_{n}(b_{k}) \neq 0$, for all $n,k$, then associated with this recurrence relation  there exists a linear functional $\mathcal{L}$ such that the  rational functions $\mathcal{S}_n(z) = P_n(z)  \prod_{j =1}^{n}[(z-a_j)^{-1}(z-b_j)^{-1}]$,  $n \geq 1$,
satisfy the orthogonality 
\begin{equation} \label{Eq-IsmailMasson-Orthogonality}
    \mathcal{L}[z^{j}\mathcal{S}_n(z)] = 0 \quad \mbox{for} \quad  0 \leq j < n. 
\end{equation}

The importance of these recurrence relations were further highlighted in the work of Zhedanov \cite{Zhedanov-JAT1999},  where the author shows that they are connected to generalized eigenvalue problems involving two tri-diagonal matrices.  Just as in  \cite{Zhedanov-JAT1999}  we will refer to these recurrence relations  as $R_{II}$ type recurrence relations. 

Our objective in the present manuscript is to consider the special $R_{II}$ type recurrence relation 
\begin{equation} \label{Eq-Special-R2Type-RR}
     P_{n+1}(\xt) = (\xt - c_{n+1}) P_{n}(\xt)  - d_{n+1} (\xt^2 + 1) P_{n-1}(\xt), \quad n \geq 1,
\end{equation}
with $P_0(\xt) = 1$ and $P_{1}(\xt) = \xt - c_1$, where $\{c_n\}_{n \geq 1}$ is a real sequence and $\{d_{n+1}\}_{n \geq 1}$  is a positive chain sequence.   

We denote by $\{\ell_{n+1}\}_{n \geq 0}$ the minimal parameter sequence of the positive chain sequence  $\{d_{n+1}\}_{n \geq 1}$.  Thus, $(1-\ell_n)\ell_{n+1} = d_{n+1}$, $n \geq 1$, with $\ell_1 = 0$ and $0 < \ell_n < 1$ for $n \geq 2$. 

It is known that a positive chain sequence can have multiple (infinitely many) parameter sequences or just a single parameter sequence.  By Wall's criteria the positive chain sequence $\{d_{n+1}\}_{n \geq 1}$ has only a single parameter sequence if and only if the series 
\[
    \sum_{n=1}^{\infty} \prod_{j=1}^{n} \frac{\ell_{j+1}}{(1-\ell_{j+1})}
\]
is divergent. When the positive chain sequence $\{d_{n+1}\}_{n \geq 1}$ has multiple parameter sequences one could talk about its  maximal parameter sequence $\{M_{n+1}\}_{n \geq 0}$ such that 
\[0 < g_{n} < M_{n} < 1 \quad \mbox{and} \quad  (1-g_{n})g_{n+1} = (1-M_{n})M_{n+1} = d_{n+1}, \quad \mbox{for} \quad  n \geq 1, 
\]
where $\{g_{n+1}\}_{n \geq 0}$ is any other parameter sequence of $\{d_{n+1}\}_{n \geq 1}$ different from the minimal and maximal parameter sequences.  For all the  information concerning positive chain sequences that we use in this manuscript we refer to   Chihara \cite{Chihara-book}. 

Let  $\Fi_0(\xt) = P_0(\xt)$,  
\begin{equation} \label{Eq-Rationals-1}
      \Fi_n(\xt) =  \frac{(-1)^{n}}{(\xt-i)^{n} \prod_{j=1}^{n}\sqrt{d_{j+1}}} P_n(\xt) , \quad n \geq 1,
\end{equation} 
and $\mathbf{u}_{n}(\xt) = [\Fi_0(\xt), \Fi_1(\xt), \ldots, \Fi_{n-1}(\xt)]^{T}$, where $\{P_{n}\}_{n \geq 0}$ and $\{d_{n+1}\}_{n \geq 1}$ are as in \eqref{Eq-Special-R2Type-RR}.  Then the following theorem can be stated.  

\begin{theo} \label{Thm-GenEigenProblem} For any $n \geq 2$, 
\begin{equation} \label{Eq-Matrix-equality}
   \mathbf{A}_{n} \mathbf{u}_{n}(\xt) = \xt\,  \mathbf{B}_{n}  \mathbf{u}_{n}(\xt)+ \sqrt{d_{n+1}}(\xt-i) \Fi_{n}(\xt)\,\mathbf{e}_{n}, 
\end{equation}
where the $n \times n$ tridiagonal matrices $\mathbf{A}_{n}$, $\mathbf{B}_{n}$ are respectively 

\[
    \left[ 
       \begin{array}{cccccc}
        c_1  & \! i\sqrt{d_2} & 0  & \!\!\! \cdots & 0 & 0  \\[1ex]
        \!\!\! -i\sqrt{d_2}  & c_2  & \!\! i\sqrt{d_3} & \!\!\! \cdots & 0 & 0 \\[1ex]
         0  & \!\!\!\!\! -i\sqrt{d_3}  & c_3 & \!\!\! \cdots & 0 & 0  \\[1ex]
         \vdots  & \vdots  & \vdots &    & \vdots & \vdots \\[1ex]
          0  & 0  & 0 & \!\!\! \cdots & c_{n-1} & \!\! i\sqrt{d_{n}}  \\[1ex]
          0  & 0  & 0 & \!\!\! \cdots & \!\!\!\!\!  -i\sqrt{d_{n}} & c_{n} 
      \end{array} \! \right], 
\quad 
    \left[ 
       \begin{array}{cccccc}
        1 & \!\sqrt{d_2} & 0 & \cdots & 0 & 0  \\[1ex]
        \!\sqrt{d_2} & 1 &  \!\sqrt{d_3} & \cdots & 0 & 0 \\[1ex]
         0 & \!\sqrt{d_3} & 1 & \cdots & 0 & 0  \\[1ex]
         \vdots & \vdots & \vdots &    & \vdots & \vdots \\[1ex]
          0 & 0 & 0 & \cdots & 1 & \!\sqrt{d_{n}}  \\[1ex]
          0 & 0 & 0 & \cdots & \!\sqrt{d_{n}} & 1 \\[1ex]
      \end{array} \right],
\] \\[0ex]
and $\mathbf{e}_n$ is the $n^{th}$ column of the $n \times n$ identity matrix. Moreover,  the zeros of $P_{n}$ ($n \geq 2$)  are the eigenvalues of the generalized eigenvalue problem  
\begin{equation} \label{Eq-Gen-Eigenproblem}
   \mathbf{A}_{n} \mathbf{u}_{n}(\xt)= \xt\, \mathbf{B}_{n} \mathbf{u}_{n}(\xt).  
\end{equation} 
\end{theo}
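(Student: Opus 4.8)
The plan is to reduce the entire statement to a single three-term recurrence satisfied by the rational functions $\Fi_n$ of \eqref{Eq-Rationals-1}, and then to read off both the matrix identity \eqref{Eq-Matrix-equality} and the eigenvalue claim from that recurrence.

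First I would derive the recurrence for $\{\Fi_n\}$. Writing $P_n(\xt) = (-1)^n (\xt-i)^n \big(\prod_{j=1}^n \sqrt{d_{j+1}}\big)\Fi_n(\xt)$ from \eqref{Eq-Rationals-1}, I would substitute into \eqref{Eq-Special-R2Type-RR}, factor $\xt^2+1 = (\xt-i)(\xt+i)$, and divide through by $(-1)^n(\xt-i)^n \prod_{j=1}^n\sqrt{d_{j+1}}$. After the products of $\sqrt{d_{j+1}}$ telescope this should collapse to
\[
(\xt+i)\sqrt{d_{n+1}}\,\Fi_{n-1}(\xt) + (\xt - c_{n+1})\Fi_n(\xt) + (\xt-i)\sqrt{d_{n+2}}\,\Fi_{n+1}(\xt) = 0, \quad n \geq 1.
\]
I would also record the base relation $(c_1 - \xt)\Fi_0(\xt) - (\xt-i)\sqrt{d_2}\,\Fi_1(\xt) = 0$, which follows directly from $\Fi_0 = 1$ and $\Fi_1(\xt) = -(\xt-c_1)/[(\xt-i)\sqrt{d_2}]$.

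Next I would verify that \eqref{Eq-Matrix-equality} is simply this recurrence written row by row. The matrix $\mathbf{A}_n - \xt\,\mathbf{B}_n$ is tridiagonal with diagonal entries $c_k - \xt$, superdiagonal entries $-(\xt-i)\sqrt{d_{k+1}}$ and subdiagonal entries $-(\xt+i)\sqrt{d_k}$. Since the $k$-th entry of $\mathbf{u}_n(\xt)$ is $\Fi_{k-1}(\xt)$, row $k$ of $(\mathbf{A}_n-\xt\,\mathbf{B}_n)\mathbf{u}_n(\xt)$ is, up to sign, the displayed recurrence with index shifted by one: the first row reproduces the base relation, the interior rows $2 \leq k \leq n-1$ reproduce the recurrence and hence vanish, and in the last row the absent superdiagonal term is exactly $\sqrt{d_{n+1}}(\xt-i)\Fi_n(\xt)$, which is the boundary contribution $\sqrt{d_{n+1}}(\xt-i)\Fi_n(\xt)\,\mathbf{e}_n$. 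This establishes \eqref{Eq-Matrix-equality}.

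Finally, for the eigenvalue claim I would compute $\det(\mathbf{A}_n - \xt\,\mathbf{B}_n)$ directly. Expanding the tridiagonal determinant along the last row gives the recurrence $\Delta_k = (c_k - \xt)\Delta_{k-1} - (\xt^2+1)d_k\,\Delta_{k-2}$ for the leading principal minors, since each off-diagonal pair contributes $(\xt-i)(\xt+i)d_k = (\xt^2+1)d_k$. With $\Delta_0 = 1$ and $\Delta_1 = c_1-\xt$, the substitution $\Delta_k = (-1)^k P_k$ turns this into \eqref{Eq-Special-R2Type-RR}, so $\det(\mathbf{A}_n - \xt\,\mathbf{B}_n) = (-1)^n P_n(\xt)$; hence the roots of the characteristic polynomial of \eqref{Eq-Gen-Eigenproblem} are precisely the zeros of $P_n$. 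Alternatively, once \eqref{Eq-Matrix-equality} is known, any zero $\xt_0$ of $P_n$ is real and thus $\neq i$, so $\Fi_n(\xt_0)=0$, whence $\mathbf{u}_n(\xt_0)$ lies in the kernel of $\mathbf{A}_n - \xt_0\mathbf{B}_n$ and is nonzero because its first component $\Fi_0(\xt_0)=1$. The only delicate points are the sign and $(\xt\pm i)$ bookkeeping in the recurrence and the alignment of the recurrence index with the matrix rows; everything else is routine.
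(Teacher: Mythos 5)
Your proof is correct, and its first half is exactly the paper's argument: the paper likewise transcribes \eqref{Eq-Special-R2Type-RR} into the relations $(\xt-i)\sqrt{d_2}\,\Fi_1+(\xt-c_1)\Fi_0=0$ and $(\xt-i)\sqrt{d_{k+1}}\,\Fi_k+(\xt-c_k)\Fi_{k-1}+(\xt+i)\sqrt{d_k}\,\Fi_{k-2}=0$ for $2\le k\le n$, and matches them row by row against $\mathbf{A}_n-\xt\,\mathbf{B}_n$, with the missing superdiagonal term in row $n$ supplying the boundary vector $\sqrt{d_{n+1}}(\xt-i)\Fi_n(\xt)\,\mathbf{e}_n$. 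Where you genuinely diverge is the eigenvalue claim. The paper is terse there: it only notes $P_n(i)=\prod_{j=1}^n(i-c_j)\neq 0$, so the zeros of $P_n$ are precisely the zeros of $(\xt-i)\Fi_n(\xt)$, and concludes from \eqref{Eq-Matrix-equality} that each zero gives an eigenpair --- essentially your ``alternative'' kernel argument, and literally only the inclusion (zeros of $P_n$) $\subseteq$ (eigenvalues). Your primary route, the tridiagonal expansion $\Delta_k=(c_k-\xt)\Delta_{k-1}-(\xt^2+1)d_k\,\Delta_{k-2}$ yielding $\det(\mathbf{A}_n-\xt\,\mathbf{B}_n)=(-1)^nP_n(\xt)$, is stronger: it identifies the characteristic polynomial of the pencil exactly, hence gives both inclusions together with multiplicities, closing a converse the paper leaves implicit. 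One caution on the alternative argument: you justify $\xt_0\neq i$ by asserting the zeros of $P_n$ are real, but in the paper realness is deduced \emph{after} this theorem (from positive definiteness of $\mathbf{B}_n$, and independently via the Wronskian induction of the interlacing theorem), so in the paper's logical order that step is circular; replace it with the paper's own observation that $P_n(i)=\prod_{j=1}^n(i-c_j)\neq 0$ --- immediate from the recurrence since $\xt^2+1$ vanishes at $\xt=i$ and the $c_j$ are real --- which costs nothing and also confirms the nonvanishing of the denominator in \eqref{Eq-Rationals-1} at every zero of $P_n$.
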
  

\vspace{1ex}

The generalized eigenvalue problem given by Theorem \ref{Thm-GenEigenProblem} forms the basis for our study in this manuscript.

Given a nontrivial positive measure $\mu$ on the unit circle $\T: = \{\zeta=e^{i\theta}\!\!: \, 0 \leq \theta \leq 2\pi \}$   let  $\{\Phi_{n}\}_{n \geq 0}$ be the associated sequence of monic orthogonal polynomials. The Verblunsky coefficients  $\alpha_{n} = -\overline{\Phi_{n+1}(0)}$, $n \geq 0$, which satisfy $|\alpha_{n}| < 1$, are known to play an important role in the theory of orthogonal polynomials on the unit circle.   For basic information concerning the theory of orthogonal polynomials on the unit circle we refer to \cite{Ismail-Book, Simon-Book-p1, Szego-Book}.  

A monic para orthogonal polynomial on the unit circle of degree $n$ is given by  $z\Phi_{n-1}(z) \linebreak  - \tau \Phi^{\ast}_{n-1}(z)$, where  $|\tau| =1$ and $\Phi^{\ast}_{n}(z) = z^n \overline{\Phi_{n}(1/\overline{z})}$. The name para-orthogonal polynomials first appeared in Jones, Nj\aa stad and Thron  \cite{JoNjTh-1989}, where it was also shown  that the zeros of these polynomials are simple and are of modulus equal to one. For some of the  contributions that bring  interesting properties of the zeros of these polynomials we cite \cite{CMV-EJA2002}, \cite{Golinskk-AMH2002}, \cite{Simon-JMAA2007} and \cite{Wong-JAT2007}. One of the well known  applications of para-orthogonal polynomials is with  regards to quadrature formulas on the unit circle and, in \cite{BracLiRanga-2005} and \cite{DarNjaVAssh-RMJ2003}, such quadrature formulas have been used to study the frequency analysis problem.

The sequence of polynomials $\{P_{n}\}_{n \geq 0}$ given by the  $R_{II}$ type recurrence \eqref{Eq-Special-R2Type-RR}  will be shown to be related to a certain sequence $\{z\Phi_{n-1}(z) - \tau_{n} \Phi^{\ast}_{n-1}(z)\}_{n \geq 0}$ of para-orthogonal polynomials on the unit circle. Thus, the generalized eigenvalue problem given by Theorem \ref{Thm-GenEigenProblem} provide us with another approach to study the properties of  measures and orthogonal polynomials on the unit circle.

With respect to the orthogonality \eqref{Eq-IsmailMasson-Orthogonality} associated with the $R_{II}$ type recurrence \eqref{Eq-Special-R2Type-RR} the following can be stated. 

\begin{theo} \label{Thm-Special-R2Type-MF}  Consider the  $R_{II}$ type recurrence \eqref{Eq-Special-R2Type-RR}, where we assume $\{d_{n+1}\}_{n \geq 1}$ to be  a positive chain sequence having multiple parameter sequences. Let $\{M_{n+1}\}_{n \geq 0}$ be the  maximal parameter sequence of $\{d_{n+1}\}_{n \geq 1}$ and let $\nu$ be the probability measure on the unit circle such that its Verblunsky coefficients are

\[
     \beta_{n-1} = \frac{1}{\tau_{n-1}}\,\frac{1-2M_{n} - i c_n}{1 - i c_n},  \quad n \geq 1,
\]
where $\tau_{0}= 1$ and $\tau_{n} =  \tau_{n-1}(1-ic_n)/(1+ic_n)$, $n \geq 1$.
If $d\varphi(\xt) = - d\nu((\xt+i)/(\xt-i))$ then 
\[
      \int_{-\infty}^{\infty} \xt^{j} \, \frac{P_{n}(\xt)}{(\xt^2+1)^{n}} \,  d \varphi(\xt) = \gamma_n \delta_{n,j}, \quad j=0,1, \ldots, n,
\]
where $\gamma_{0} = \int_{-\infty}^{\infty} d\varphi(\xt) = \int_{\T} d \nu(\zeta) = 1$ and $\gamma_{n} = (1- M_{n})\gamma_{n-1}$, $n \geq 1$.   
\end{theo}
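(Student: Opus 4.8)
The plan is to transport everything to the unit circle through the Cayley transform $\zeta=(\xt+i)/(\xt-i)$, whose inverse is $\xt=i(\zeta+1)/(\zeta-1)$ and which maps $\R$ bijectively onto $\T$. First I would record the elementary identities $\xt-i=2i/(\zeta-1)$, $\xt+i=2i\zeta/(\zeta-1)$ and $\xt^2+1=-4\zeta/(\zeta-1)^2$, and introduce the monic degree-$n$ polynomial $\mathcal{P}_n(\zeta):=(\zeta-1)^nP_n(\xt)/P_n(i)$, where $P_n(i)=\prod_{k=1}^n(i-c_k)=i^n\prod_{k=1}^n(1+ic_k)$ is nonzero because the $c_k$ are real (this uses $P_{n+1}(i)=(i-c_{n+1})P_n(i)$, immediate from \eqref{Eq-Special-R2Type-RR} since $\xt^2+1$ vanishes at $\xt=i$). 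Substituting these into $\int \xt^j P_n(\xt)(\xt^2+1)^{-n}d\varphi$ and using $d\varphi=-d\nu$, the integral becomes, up to the orientation fixed by $\gamma_0=\int d\varphi=\int_\T d\nu=1$,
\[
\frac{i^jP_n(i)}{(-4)^n}\int_\T (\zeta+1)^j(\zeta-1)^{n-j}\zeta^{-n}\mathcal{P}_n(\zeta)\,d\nu(\zeta).
\]

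Next I would reduce the whole theorem to a single statement about the $\nu$-moments $I_k^{(n)}:=\int_\T \zeta^{-k}\mathcal{P}_n\,d\nu$ for $0\le k\le n$. Writing $(\zeta+1)^j(\zeta-1)^{n-j}=\sum_{m=0}^n r_m\zeta^m$, the factor $\zeta^{-n}$ turns the circle integral into $\sum_{m=0}^n r_m I_{n-m}^{(n)}$. Hence, \emph{if} the numbers $I_0^{(n)},\dots,I_n^{(n)}$ all equal a common value $C_n$, the integral collapses to $C_n\sum_{m=0}^n r_m=C_n\,[(\zeta+1)^j(\zeta-1)^{n-j}]_{\zeta=1}=C_n\,2^n\delta_{j,n}$, because $(\zeta+1)^j(\zeta-1)^{n-j}$ vanishes at $\zeta=1$ exactly when $j<n$. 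With $C_n=\prod_{l=1}^n\kappa_l$ and $\kappa_l=2(1-M_l)/(1+ic_l)$ (established below), the $j=n$ value is $\tfrac{i^nP_n(i)}{(-4)^n}2^nC_n=\gamma_n$, since $i^{2n}2^{2n}/(-4)^n=1$ and the factor $\prod_{l}(1+ic_l)$ cancels. Thus everything rests on the equal-moments property $I_0^{(n)}=\dots=I_n^{(n)}$.

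The core of the proof is therefore to establish this equal-moments property by induction on $n$, and here I would use two structural facts. The first, and the step I expect to be the main obstacle, is the bridge to the orthogonal polynomials of $\nu$: one must show $\mathcal{P}_n(\zeta)=\Phi_n(\zeta)+\kappa_n\mathcal{P}_{n-1}(\zeta)$ with $\kappa_n=2(1-M_n)/(1+ic_n)$, where $\Phi_n$ is the monic OPUC of $\nu$. This is exactly where the prescribed Verblunsky coefficients $\beta_{n-1}$ and the \emph{maximal} parameter sequence enter; I would prove it by verifying that $\Phi_n:=\mathcal{P}_n-\kappa_n\mathcal{P}_{n-1}$ obeys the Szeg\H{o} recurrence with coefficient $\beta_{n-1}$, translating the three-term recurrence for $\mathcal{P}_n$ (obtained from \eqref{Eq-Special-R2Type-RR} after the substitution) into $\{\Phi,\Phi^\ast\}$ form. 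The maximality is essential, since the minimal sequence would force $|\beta_0|=1$ and $\nu$ would degenerate. Granting this and the orthogonality $\int_\T\zeta^{-m}\Phi_n\,d\nu=0$ for $0\le m\le n-1$, one gets $I_m^{(n)}=\kappa_n I_m^{(n-1)}$ for $0\le m\le n-1$; the induction hypothesis (flatness up to index equal to the degree) then yields $I_0^{(n)}=\dots=I_{n-1}^{(n)}=\kappa_nC_{n-1}=:C_n$.

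The second fact closes the top index $m=n$ without any regress of boundary moments. Since $P_n$ has real simple zeros by Theorem \ref{Thm-GenEigenProblem}, $\mathcal{P}_n$ has all zeros on $\T$, hence is self-inversive: $\mathcal{P}_n^\ast=\overline{\tau_n}\,\mathcal{P}_n$, where the constant term of $\mathcal{P}_n$ equals $(-1)^n\overline{P_n(i)}/P_n(i)=\tau_n$ of the theorem. Conjugating inside the integral and using $\overline{\mathcal{P}_n(\zeta)}=\zeta^{-n}\mathcal{P}_n^\ast(\zeta)$ on $\T$ gives the twisted symmetry $I_{n-k}^{(n)}=\tau_n\overline{I_k^{(n)}}$, so in particular $I_n^{(n)}=\tau_n\overline{I_0^{(n)}}=\tau_n\overline{C_n}$. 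Finally the algebraic identity $C_n=\tau_n\overline{C_n}$ holds because $C_n/\overline{C_n}=\prod_{l=1}^n(\kappa_l/\overline{\kappa_l})=\prod_{l=1}^n(1-ic_l)/(1+ic_l)=\tau_n$, which is precisely the recursive definition of $\tau_n$. Hence $I_n^{(n)}=C_n$ too, completing the induction (the base case being $I_0^{(0)}=\int_\T d\nu=1$); and $C_n=\prod_{l=1}^n\kappa_l$ together with $\gamma_n=\prod_{l=1}^n(1-M_l)$ delivers $\gamma_n=(1-M_n)\gamma_{n-1}$ and the asserted value.
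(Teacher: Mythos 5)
Your proposal is correct, and it takes a genuinely different route from the paper. The paper does not work directly with $\nu$: it first invokes Theorem \ref{Thm-Orthogonality1-UC} to get the measure $\mu$ with $\int_{\T}|\zeta-1|^{-2}d\mu(\zeta)<\infty$, then imports Theorem \ref{Thm-BRS2016-2} (from \cite{BracRangaSwami-2016}) to identify $R_n=R_n(0;\cdot)$ and obtain the L-orthogonality $\int_{\T}\zeta^{-n+k}R_n(\zeta)(1-\zeta)\,d\nu(\zeta)=0$ with $d\nu=\const|\zeta-1|^{-2}d\mu$, and cites \cite{Costa-Felix-Ranga-JAT2013} for the formulas linking $\beta_{n-1}$, $\tau_n$ and the parameters, identifying $g_n=M_n$ through the fact that $\nu$ has no pure point at $\zeta=1$ (and via the Christoffel--Darboux kernel identity $R_n\propto K_n(\cdot,1)$); finally $\gamma_n$ is computed from the recurrence $\gamma_{n+1}=\gamma_n-d_{n+1}\gamma_{n-1}$ together with a direct evaluation showing $\gamma_1=1-M_1$. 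You instead bypass $\mu$, the principal-value apparatus of Section \ref{Sec-OPUC-to-POPUC2}, and the cited external results entirely: your bridge identity $\mathcal{P}_n=\Phi_n+\kappa_n\mathcal{P}_{n-1}$ with $\kappa_n=2(1-M_n)/(1+ic_n)$ does check out by direct coefficient matching against the Szeg\H{o} recurrence (the coefficient of $\mathcal{P}_{n-1}$ forces $\overline{\beta}_{n-1}\overline{\tau}_{n-1}=(1-2M_n+ic_n)/(1+ic_n)$, exactly the stated Verblunsky coefficients, and the coefficient of $z\mathcal{P}_{n-2}$ matches precisely because $d_n=(1-M_{n-1})M_n$, which is where maximality enters structurally; your observation that $M_1>0$, i.e.\ multiplicity of parameter sequences, is what gives $|\beta_0|<1$ is also exactly right). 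The flat-moments induction plus the self-inversive symmetry $\mathcal{P}_n^{\ast}=\overline{\tau}_n\mathcal{P}_n$ and the identity $C_n/\overline{C}_n=\tau_n$ then deliver both the vanishing for $j<n$ and the value $\gamma_n=\prod_{l=1}^n(1-M_l)$ in one stroke. What your route buys is self-containedness and the fact that it simultaneously re-proves, rather than assumes, that the stated $\beta_{n-1}$ are the Verblunsky coefficients attached to the recurrence via the maximal parameters; what the paper's route buys is the extra structural context (the relation $\nu\propto|\zeta-1|^{-2}\mu$ to the measure of Theorem \ref{Thm-Orthogonality1-UC}, and $R_n$ as kernel polynomials), plus a shorter computation of $\gamma_n$ once the orthogonality is granted.
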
 

\vspace{1ex}

Apart from the results that we have presented in this manuscript, a further important advantage that we could point out is the availability of nice numerical techniques for the generation of eigenvalues    of our generalized eigenvalue problem. Subsequently, from these eigenvalues  one could easily determine the zeros of the respective para-orthogonal polynomials.

The manuscript is organized as follows. The proof of Theorem \ref{Thm-GenEigenProblem} and some properties of the zeros of the polynomials $P_n$ are given in Section \ref{Sec-InitialResults}. In Section \ref{Sec-EVP-to-Orthogonality}, using techniques from  linear algebra,  we obtain the relation between the polynomials $P_n$ and  orthogonal polynomials on the unit circle. In Section \ref{Sec-OPUC-to-POPUC2} we show how the results obtained in Section \ref{Sec-EVP-to-Orthogonality} are related to those established in \cite{BracRangaSwami-2016} and use this to obtain information concerning the orthogonality satisfied by the polynomials $P_{n}$. In particular, in this section we also give the proof of Theorem \ref{Thm-Special-R2Type-MF}. Finally, in Section \ref{Sec-Examples} examples are given to justify the results. 

%%%%%%%%%%%%%%%%%%%%%%%%%%%%%%%%%%%%%%%%%%%%%%%%%%%%%%%%%%%%%%%
\setcounter{equation}{0}
\section{Polynomials from the special $R_{II}$ recurrence } \label{Sec-InitialResults}

In this section we will consider some properties of the polynomials $P_{n}$, $n \geq 1$, given by the  $R_{II}$ recurrence relation \eqref{Eq-Special-R2Type-RR}. 

\begin{theo}  \label{Thm-LeadingCoefficient}
  The polynomial $P_n$ is of exact degree $n$ with positive leading coefficient. Precisely, if  we denote by  $\mathfrak{p}_n$ the leading coefficient of  $P_n(z)$ then $\mathfrak{p}_0 =1$, $\mathfrak{p}_1 = 1$  and 
\[ 
      0 < (1-\ell_n) = \frac{\mathfrak{p}_{n}}{\mathfrak{p}_{n-1}} < 1,  \quad  n \geq 2.
\] 
Here, $\{\ell_{n+1}\}_{n \geq 0}$ is the minimal parameter sequence of the positive chain sequence $\{ d_{n+1}\}_{n \geq 1}$. 

\end{theo}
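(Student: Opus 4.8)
The plan is to argue by induction on $n$, tracking the coefficient of the highest possible power of $\xt$ in $P_n$ as dictated by the recurrence \eqref{Eq-Special-R2Type-RR}. The two base cases are immediate from the definitions: $P_0(\xt)=1$ has degree $0$ with $\mathfrak{p}_0=1$, and $P_1(\xt)=\xt-c_1$ has degree $1$ with $\mathfrak{p}_1=1$; the claimed ratio $\mathfrak{p}_1/\mathfrak{p}_0=1$ is consistent with $\ell_1=0$. The inductive hypothesis I would carry is that, for some $n\geq 1$, the polynomials $P_{n-1}$ and $P_n$ have exact degrees $n-1$ and $n$ with positive leading coefficients satisfying $\mathfrak{p}_n/\mathfrak{p}_{n-1}=1-\ell_n$.

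For the inductive step, I would read off the top-degree contributions on the right-hand side of \eqref{Eq-Special-R2Type-RR}. The term $(\xt-c_{n+1})P_n(\xt)$ has degree $n+1$ with leading coefficient $\mathfrak{p}_n$, while $d_{n+1}(\xt^2+1)P_{n-1}(\xt)$ is also of degree $n+1$ with leading coefficient $d_{n+1}\mathfrak{p}_{n-1}$. Hence, a priori, the coefficient of $\xt^{n+1}$ in $P_{n+1}$ equals $\mathfrak{p}_n-d_{n+1}\mathfrak{p}_{n-1}$, and the crux of the argument is to show this quantity does not vanish. This is exactly where the minimal parameter sequence enters: dividing by $\mathfrak{p}_n$ and substituting $\mathfrak{p}_{n-1}=\mathfrak{p}_n/(1-\ell_n)$ from the inductive hypothesis together with the defining relation $d_{n+1}=(1-\ell_n)\ell_{n+1}$, I would obtain
\[
  \frac{\mathfrak{p}_n-d_{n+1}\mathfrak{p}_{n-1}}{\mathfrak{p}_n}
   = 1-\frac{d_{n+1}}{1-\ell_n}
   = 1-\ell_{n+1}.
\]
Since $0<\ell_{n+1}<1$ for $n+1\geq 2$, the factor $1-\ell_{n+1}$ is strictly positive, so setting $\mathfrak{p}_{n+1}=\mathfrak{p}_n(1-\ell_{n+1})$ produces a positive, and in particular nonzero, number.

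This single computation simultaneously establishes that $P_{n+1}$ has exact degree $n+1$, that its leading coefficient is positive, and that $\mathfrak{p}_{n+1}/\mathfrak{p}_n=1-\ell_{n+1}$, thereby closing the induction; the two-sided bound $0<1-\ell_n<1$ for $n\geq 2$ then follows immediately from $0<\ell_n<1$. The only delicate point, and the natural candidate for an obstacle, is the possible cancellation of the two degree-$(n+1)$ contributions, which would drop the degree of $P_{n+1}$ and invalidate the recursion for the leading coefficients. The displayed identity shows this cannot occur precisely because the positive chain sequence structure forces $\ell_{n+1}<1$; consequently no separate argument for the exact degree is required, as exactness and positivity are both built into the same inductive step.
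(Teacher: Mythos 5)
Your proof is correct and essentially matches the paper's: the paper runs the same induction but extracts the identity $\mathfrak{p}_{n+1}/\mathfrak{p}_n = 1 - d_{n+1}/(1-\ell_n) = 1-\ell_{n+1}$ by taking $\xt\to\infty$ in a ratio formed from the recurrence, which is just your direct comparison of the two degree-$(n+1)$ coefficients in disguise. In both arguments the nonvanishing of $\mathfrak{p}_{n+1}$ rests on the same fact, namely that the minimal parameters satisfy $0<\ell_{n+1}<1$, so no substantive difference exists.
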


\begin{proof}[\bf Proof]
Clearly, $\mathfrak{p}_0 =1$ and $\mathfrak{p}_1 = 1$ and one can directly verify from \eqref{Eq-Special-R2Type-RR} that  $\mathfrak{p}_2 = 1 - d_2 = 1 - \ell_2 > 0$.   Hence, the theorem holds for $n =2$. 

Assume that $\mathfrak{p}_{n-1} > 0$,  $\mathfrak{p}_{n} > 0$ and $1 - \ell_{n} = \mathfrak{p}_{n}/\mathfrak{p}_{n-1}$.   Observe that \eqref{Eq-Special-R2Type-RR} also leads to 
\[
       \frac{\xt^2+1}{(\xt-c_{n+1})(\xt-c_{n})} d_{n+1} = \frac{P_n(\xt)}{(\xt-c_n)P_{n-1}(\xt)}\Big(1 - \frac{P_{n+1}(\xt)}{(\xt-c_{n+1})P_{n}(\xt)}\Big), \quad n \geq 1.
\] 
Hence, by letting $\xt \to + \infty$ in the above equation we have 
\[
      d_{n+1} = (1-\ell_{n})\,  \lim_{\xt \to \infty} \Big(1 - \frac{P_{n+1}(\xt)}{(\xt-c_{n+1})P_{n}(\xt)}\Big).
\]
Since $\{\ell_{n+1}\}_{n \geq 0}$ is a parameter sequence of the positive chain sequence $\{ d_{n+1}\}_{n \geq 1}$, we thus have
\[
      0\  < \ \lim_{\xt \to \infty} \Big(1 - \frac{P_{n+1}(\xt)}{(\xt-c_{n+1})P_{n}(\xt)}\Big) = \ell_{n+1} \ < \ 1.
\]
Hence, $\mathfrak{p}_{n+1} \neq 0$ and $\mathfrak{p}_{n+1}/\mathfrak{p}_{n} = 1 - \ell_{n+1} > 0$. The theorem follows by induction. 
\end{proof} 

Now we give the proof of Theorem \ref{Thm-GenEigenProblem}. 

\begin{proof}[\bf Proof of Theorem \ref{Thm-GenEigenProblem}]   From \eqref{Eq-Special-R2Type-RR} we have for any $n \geq 2$, 
\[  
  \begin{array}l
    (\xt-i)\sqrt{d_2}\, \Fi_{1} + (\xt-c_{1})\Fi_{0} = 0 ,  \\[1ex]
    (\xt-i)\sqrt{d_{k+1}}\, \Fi_{k} + (\xt-c_{k})\Fi_{k-1} + (\xt+i)\sqrt{d_{k}}\, \Fi_{k-2} = 0, \quad k=2,3, \ldots, n,
   \end{array}
\]
where $\Fi_{j}$ refers to $\Fi_{j}(\xt)$. This we can write in the equivalent form 
\[
   \begin{array}{rl}
      c_{1}\, \Fi_{0} + i \sqrt{d_{2}}\, \Fi_{1} &\!\! = \xt\, [\Fi_{0} + \sqrt{d_{2}}\, \Fi_{1}], \\[1ex]
     -i \sqrt{d_{k}}\,\Fi_{k-2} + c_{k}\, \Fi_{k-1} + i \sqrt{d_{k+1}}\, \Fi_{k} &\!\! = \xt\, [ \sqrt{d_{k}}\, \Fi_{k-2} +  \Fi_{k-1} +  \sqrt{d_{k+1}}\, \Fi_{k} ], 
   \end{array}
\]
for $k = 2, 3, \ldots, n$, which is exactly the system of equations given by \eqref{Eq-Matrix-equality}. 

Observe that since $P_{n}(i) = \prod_{j=1}^{n}(i-c_{j})$, the point $i$ is not a zero of $P_{n}(\xt)$.  Moreover, the $n$ zeros of the polynomial $P_{n}(\xt)$ are the only zeros of $(\xt-i)\Fi_{n}(\xt)$. Hence,  we obtain from \eqref{Eq-Matrix-equality} that the zeros of $P_{n}$ are the eigenvalues of the generalized eigenvalue problem  \eqref{Eq-Gen-Eigenproblem}.
\end{proof}

The $n \times n$  matrices $\mathbf{A}_{n}$ and $\mathbf{B}_{n}$ in Theorem \ref{Thm-GenEigenProblem} are Hermitian and, since $\{d_{n+1}\}_{n \geq 1}$ is a positive chain sequence, the matrix  $\mathbf{B}_{n}$ is also positive definite.  We recall from  \cite[p.\,75]{Wall-Book} the positive definiteness of $\mathbf{B}_n$ can be easily verified from the identity 
\[
    \mathbf{v}_{n}^{H} \mathbf{B}_n \mathbf{v}_{n} = \sum_{j=1}^{n-1}\big|\sqrt{1-\ell_{j}}\, v_{j} - \sqrt{\ell_{j+1}} \,v_{j+1}\big|^2 \ + (1-\ell_{n})\, |v_{n}|^2,
\]
where $\mathbf{v}_{n} = [v_1, v_2, \ldots, v_{n}]^{T}$.  Another  way (see \cite{Wilkinson-book}) to confirm the positive definiteness of $\mathbf{B}_n$ is  its Cholesky decomposition $\mathbf{L}_n\mathbf{L}_n^{T}$ given in Section \ref{Sec-EVP-to-Orthogonality}. 

Since $\mathbf{B}_n$ is positive definite, from \eqref{Eq-Gen-Eigenproblem} we have 
\[  
     \xt_j^{(n)} = \frac{\mathbf{u}_{n}(\xt_j^{(n)})^{H}\mathbf{A}_{n}\, \mathbf{u}_{n}(\xt_j^{(n)}) }{ \mathbf{u}_{n}(\xt_j^{(n)})^{H}\mathbf{B}_{n} \, \mathbf{u}_{n}(\xt_j^{(n)} )},  \quad j=1,2,\ldots,n,
\]
where $x_{j}^{(n)}$ are the eigenvalues of  the generalized eigenvalue problem \eqref{Eq-Gen-Eigenproblem}. Thus, one can verify that the eigenvalues of the generalized eigen-system \eqref{Eq-Gen-Eigenproblem} (i.e., the zeros of $P_n$)  are all real (see, for example, \cite[Chap.\,5]{BDDRV-2000}).  The next theorem shows that we can say more about the zeros of the polynomials $P_n$. 

\begin{theo}  \label{Thm-InterlacingZeros}
The zeros $\xt_j^{(n)}$, $j =1,2, \ldots, n$ of $P_n$ are real and simple.  Assuming  the ordering $\xt_{j}^{(n)} < \xt_{j-1}^{(n)}$, $1 \leq j \leq n$, for the zeros, we also have the interlacing property 
\[
      \xt_{n+1}^{(n+1)} < \xt_{n}^{(n)} <  \xt_{n}^{(n+1)} < \cdots <  \xt_{2}^{(n+1)} < \xt_{1}^{(n)} <  \xt_{1}^{(n+1)},
\]
for $n \geq 1$. 
\end{theo}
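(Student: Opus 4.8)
The plan is to combine the reality of the zeros, which has already been established from the generalized eigenvalue problem \eqref{Eq-Gen-Eigenproblem} (the pencil $(\mathbf{A}_n,\mathbf{B}_n)$ being Hermitian with $\mathbf{B}_n$ positive definite), with a Sturm-type sign-change argument driven by the recurrence \eqref{Eq-Special-R2Type-RR}. The decisive observation is that, evaluated at a real zero $\xt_0$ of $P_n$, the recurrence collapses to
\[
   P_{n+1}(\xt_0) = -\,d_{n+1}(\xt_0^2+1)\,P_{n-1}(\xt_0),
\]
and the whole argument hinges on the fact that on the real line the factor $\xt_0^2+1$ is strictly positive, playing exactly the role a positive constant plays in the classical three-term recurrence. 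Together with $d_{n+1}>0$ this forces $P_{n+1}(\xt_0)$ and $P_{n-1}(\xt_0)$ to have opposite signs whenever $P_{n-1}(\xt_0)\neq 0$.

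First I would record a no-common-zeros fact: if $\xt_0$ were a common real zero of $P_n$ and $P_{n-1}$, then \eqref{Eq-Special-R2Type-RR} with $n$ replaced by $n-1$ gives $d_n(\xt_0^2+1)P_{n-2}(\xt_0)=0$, hence $P_{n-2}(\xt_0)=0$; descending through the recurrence one reaches $P_0(\xt_0)=1=0$, a contradiction. Thus consecutive $P_n$ never share a zero, which guarantees $P_{n-1}$ is nonzero at every zero of $P_n$ and makes the displayed sign relation usable.

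The interlacing and simplicity are then proved simultaneously by induction on $n$, the induction hypothesis being that $P_n$ has $n$ real simple zeros strictly separating the $n-1$ zeros of $P_{n-1}$. Assuming this, $P_{n-1}$ has exactly one simple zero in each open interval between consecutive zeros of $P_n$, so $P_{n-1}$ changes sign from one zero of $P_n$ to the next; by the displayed sign relation $P_{n+1}$ does too, producing a zero of $P_{n+1}$ in each of the $n-1$ interior gaps. For the two outermost zeros I would use the positive leading coefficient of every $P_n$ from Theorem \ref{Thm-LeadingCoefficient}: beyond the largest zero $\xt_1^{(n)}$ of $P_n$ one has $P_{n-1}(\xt_1^{(n)})>0$ (the largest zero of $P_{n-1}$ lies below it), so $P_{n+1}(\xt_1^{(n)})<0$ while $P_{n+1}\to+\infty$ as $\xt\to+\infty$, giving a zero above $\xt_1^{(n)}$; a parallel parity computation at the smallest zero, comparing the signs $(-1)^n$ and $(-1)^{n+1}$ of $P_{n+1}$ at $\xt_n^{(n)}$ and at $-\infty$, gives a zero below $\xt_n^{(n)}$. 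This exhibits $n+1$ distinct real zeros of the degree-$(n+1)$ polynomial $P_{n+1}$, so these are all of them, all simple, and arranged exactly as in the asserted chain.

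The base cases $n=1,2$ must be checked directly from $P_1=\xt-c_1$ and $P_2=(\xt-c_2)(\xt-c_1)-d_2(\xt^2+1)$, using $\mathfrak{p}_2=1-\ell_2>0$. The main obstacle I anticipate is not the interior gaps, which are handled cleanly by the sign relation, but the careful bookkeeping of the two exterior zeros: one must track the signs of $P_{n-1}$ at the extreme zeros of $P_n$ and the signs of $P_{n+1}$ at $\pm\infty$ via the leading-coefficient information, and verify that the parities match so that exactly one additional zero appears on each side, thereby accounting for all $n+1$ zeros and closing the induction.
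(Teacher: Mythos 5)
Your proof is correct, and its engine is the same as the paper's: an induction on interlacing powered by the observation that at a real zero $\xt_0$ of $P_n$ the recurrence \eqref{Eq-Special-R2Type-RR} collapses to $P_{n+1}(\xt_0) = -d_{n+1}(\xt_0^2+1)P_{n-1}(\xt_0)$, with the factor $d_{n+1}(\xt_0^2+1)$ strictly positive on the real line, plus the positive leading coefficients from Theorem~\ref{Thm-LeadingCoefficient} to pin down the two exterior zeros. The difference is one of packaging: the paper never writes this identity explicitly, but instead introduces the Wronskian $\mathcal{G}_n = P_n'P_{n-1} - P_{n-1}'P_n$ and its recurrence \eqref{Eq-WronskianRelation}, and runs the induction on the single positivity statement $\mathcal{G}_n(\xt_j^{(n)}) > 0$; at a zero of $P_n$ the Wronskian recurrence reduces exactly to your identity multiplied by $P_n'(\xt_j^{(n)})$, so the two bookkeeping devices are equivalent. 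Your version is the classical Sturm sign-change count and is, if anything, more explicit at the closing step (the paper's ``this leads to the required interlacing'' silently performs precisely the interior-gap and exterior-parity count you spell out); your no-common-zeros lemma is sound but redundant, since strict interlacing in the induction hypothesis already guarantees $P_{n-1}(\xt_j^{(n)}) \neq 0$, and likewise the appeal to reality via the pencil $(\mathbf{A}_n,\mathbf{B}_n)$ is unnecessary, as your counting exhibits all $n+1$ zeros as real. What the Wronskian formulation buys is not the interlacing itself but the sequel: the positivity of $\mathcal{G}_n$ at the nodes is upgraded in Theorem~\ref{Thm-Positive-Wronskian} to $\mathcal{G}_n(\xt) > 0$ for all real $\xt$, which is exactly what makes the quadrature weights $\lambda_r^{(n)}$ in \eqref{Eq-Lambda-rn} positive and drives the discrete orthogonality of Section~\ref{Sec-EVP-to-Orthogonality}, so the paper's choice of device is an investment in the construction that follows rather than a logical necessity for this theorem.
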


\begin{proof}[\bf Proof]
Let us consider the Wronskians 
\begin{equation} \label{Eq-Wronskians}
    \mathcal{G}_n(\xt) = P_{n}^{\prime}(\xt) P_{n-1}(\xt) - P_{n-1}^{\prime}(\xt) P_{n}(\xt),  \quad n \geq 1. 
\end{equation}
From the recurrence \eqref{Eq-Special-R2Type-RR} we find 
\begin{equation} \label{Eq-WronskianRelation}
    \mathcal{G}_{n+1}(\xt) = P_n(\xt) [ P_n(\xt) - 2 d_{n+1} \xt P_{n-1}(\xt)] + d_{n+1} [\xt^2 + 1] \mathcal{G}_{n}(\xt), \quad n \geq 1. 
\end{equation}
The only zero of $P_1$ is $\xt_1^{(1)} = c_1$, which is real. Since the leading coefficient of $P_1$ is positive, we then have from \eqref{Eq-Wronskians} that $\mathcal{G}_{1}(\xt_1^{(1)}) = P_{1}^{\prime}(\xt_{1}^{(1)})P_{0}(\xt_{1}^{(1)}) > 0$.  Hence, from \eqref{Eq-WronskianRelation} 
\[
    -P_{1}^{\prime}(\xt_{1}^{(1)}) P_{2}(\xt_1^{(1)}) = \mathcal{G}_{2}(\xt_{1}^{(1)}) = d_{2} [ (\xt_1^{(1)})^2+ 1]\,\mathcal{G}_{1}(\xt_1^{(1)})  > 0. 
\]
Thus, with the leading coefficient of $P_2$ positive we conclude that the zeros $\xt_{1}^{(2)}$ and $\xt_{2}^{(2)}$ of $P_2$ can be ordered such that $\xt_{2}^{(2)} < \xt_{1}^{(1)} < \xt_{1}^{(2)}$. 

Now suppose that the zeros of $P_n$ and $P_{n-1}$ interlace as stated in the theorem. Then since the leading coefficients of $P_n$ and $P_{n-1}$ are positive we have from \eqref{Eq-Wronskians} that 
\[
     \mathcal{G}_{n}(\xt_j^{(n)}) = P_{n}^{\prime}(\xt_j^{(n)}) P_{n-1}(\xt_j^{(n)}) > 0 , \quad j = 1,2, \ldots n. 
\]
Consequently, from \eqref{Eq-WronskianRelation},
\[
      - P_{n}^{\prime}(\xt_j^{(n)}) P_{n+1}(\xt_j^{(n)}) =  \mathcal{G}_{n+1}(\xt_j^{(n)})  = d_{n+1}  [ (\xt_{j}^{(n)})^2 + 1]\, \mathcal{G}_{n}(\xt_j^{(n)}) > 0, 
\]
for $ j = 1,2, \ldots n$.  This leads to the required interlacing of the zeros of $P_n$ and $P_{n+1}$.  Hence, we conclude the proof of the theorem by induction. 
\end{proof}

Now if the matrix $\mathbf{A}_n$ is  also positive definite (negative definite) then the zeros of $P_n$ are all positive (negative).

\begin{theo}
A sufficient condition for all the zeros of $P_n$ ($n \geq 2$) to be positive  is that 
\[
        c_1 > \sqrt{d_2}, \quad c_{j} > \sqrt{d_j} + \sqrt{d_{j+1}}, \  j =2 , 3, \ldots, n-1 \quad \mbox{and} \quad c_{n} > \sqrt{d_{n}}. 
\]
Likewise, a sufficient condition for all the zeros of $P_{n}$ to be negative  is that 
\[
        c_1 < -\sqrt{d_2}, \quad c_{j} < -\sqrt{d_j} - \sqrt{d_{j+1}}, \  j =2 , 3, \ldots, n-1 \quad \mbox{and} \quad c_{n} < -\sqrt{d_{n}}. 
\]
\end{theo}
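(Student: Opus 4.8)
The plan is to exploit the generalized-eigenvalue characterization already established: by Theorem~\ref{Thm-GenEigenProblem} the zeros $\xt_j^{(n)}$ of $P_n$ are exactly the eigenvalues of \eqref{Eq-Gen-Eigenproblem}, and since $\mathbf{B}_n$ is positive definite the Rayleigh-type identity recorded just before the theorem gives
\[
   \xt_j^{(n)} = \frac{\mathbf{u}_{n}(\xt_j^{(n)})^{H}\mathbf{A}_{n}\,\mathbf{u}_{n}(\xt_j^{(n)})}{\mathbf{u}_{n}(\xt_j^{(n)})^{H}\mathbf{B}_{n}\,\mathbf{u}_{n}(\xt_j^{(n)})},
   \qquad j=1,2,\ldots,n,
\]
with a strictly positive denominator. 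Hence the sign of every zero is governed entirely by the sign of the Hermitian form $\mathbf{v}^{H}\mathbf{A}_{n}\mathbf{v}$ evaluated on the corresponding eigenvector, and it suffices to show that the stated hypotheses force $\mathbf{A}_{n}$ to be positive definite (respectively negative definite).

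First I would record that $\mathbf{A}_{n}$ is Hermitian and tridiagonal, so its eigenvalues are real and, by Gershgorin's circle theorem, contained in the union of the real intervals $[c_j-\rho_j,\,c_j+\rho_j]$, where $\rho_j$ is the sum of the moduli of the off-diagonal entries of row $j$. The only point requiring care is the bookkeeping of these radii: each interior row $2\le j\le n-1$ carries two off-diagonal entries $i\sqrt{d_j}$ and $i\sqrt{d_{j+1}}$, so $\rho_j=\sqrt{d_j}+\sqrt{d_{j+1}}$, whereas the first and last rows carry a single off-diagonal entry, giving $\rho_1=\sqrt{d_2}$ and $\rho_n=\sqrt{d_n}$.

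With the radii identified, the first set of hypotheses is precisely $c_j-\rho_j>0$ for every $j$, so each Gershgorin interval lies in the open right half-line; consequently all eigenvalues of $\mathbf{A}_{n}$ are positive and $\mathbf{A}_{n}$ is positive definite. Substituting this into the Rayleigh quotient yields $\xt_j^{(n)}>0$ for all $j$. The negative case is identical: the second set of hypotheses is exactly $c_j+\rho_j<0$, which places every Gershgorin interval in the open left half-line, making $\mathbf{A}_{n}$ negative definite and hence every zero negative.

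I do not anticipate a genuine obstacle here. The substance of the argument is simply the observation that the displayed inequalities are precisely the strict diagonal-dominance (Gershgorin) conditions guaranteeing sign-definiteness of the Hermitian matrix $\mathbf{A}_{n}$, which then feeds into the Rayleigh quotient already in hand. The one thing to watch is not to double-count the boundary-row radii $\rho_1$ and $\rho_n$, since the first and last rows each have only one off-diagonal neighbour.
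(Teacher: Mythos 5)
Your proposal is correct and follows essentially the same route as the paper, which proves the theorem by applying Gershgorin's theorem to the Hermitian matrix $\mathbf{A}_n$ and then invoking the Rayleigh-quotient identity (with $\mathbf{B}_n$ positive definite) stated just before the theorem; your write-up merely makes explicit the radius bookkeeping for the first, interior, and last rows that the paper leaves implicit.
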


\begin{proof}[\bf Proof]

We prove  this by verifying that the Hermitian matrix $\mathbf{A}_n$ has all its eigenvalues positive (negative).  So the proof is a simple application of the Gershgorin Theorem. 
\end{proof}

%%%%%%%%%%%%%%%%%%%%%%%%%%%%%%%%%%%%%%%%%%%%%%%%%%%%%%%%%%%%%%%
\setcounter{equation}{0}
\section{Orthogonality from the generalized eigenvalue problem} \label{Sec-EVP-to-Orthogonality}

As part of the proof of Theorem \ref{Thm-InterlacingZeros}  we also find that $\mathcal{G}_{n}(\xt_{j}^{(n)}) > 0$ and $\mathcal{G}_{n+1}(\xt_{j}^{(n)}) > 0$, for $j =1 ,2 , \ldots, n$ and $n \geq 1$. The following theorem shows that $\mathcal{G}_{n}(\xt) > 0$ for all real $\xt$. 

\begin{theo}  \label{Thm-Positive-Wronskian}
With  $\xt$ and $\ys$ real, let 
\[
     G_{n}(\xt,\ys) = \frac{P_{n}(\xt) P_{n-1}(\ys) - P_{n-1}(\xt) P_{n}(\ys)}{\xt-\ys}, \quad n \geq 1.
\]
Then $G_{1}(\xt,\ys) = 1$ and 
\[ 
  \begin{array}l
    \dsp \frac{G_{n}(\xt,\ys)}{(\xt-i)^{n-1}(\ys+i)^{n-1} d_2d_3 \cdots d_{n}} = \mathbf{u}_n(\ys)^{H} \mathbf{B}_{n} \mathbf{u}_{n}(\xt),
  \end{array}
\]
for $n \geq 2$. Moreover, for the Wronskians  $\mathcal{G}_{n}(\xt)$ given by \eqref{Eq-Wronskians} there hold
\[
 \begin{array}{l}
   \dsp   \frac{\mathcal{G}_{n}(\xt)}{(\xt^2+1)^{n-1} d_2 d_3 \cdots d_{n}}   = \mathbf{u}_n(\xt)^{H} \mathbf{B}_{n}   \mathbf{u}_n(\xt) > 0, 
 \end{array} 
\]
for $n \geq 2$. 
\end{theo}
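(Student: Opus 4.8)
The plan is to read $\mathbf{u}_{n}(\ys)^{H}\mathbf{B}_{n}\mathbf{u}_{n}(\xt)$ as a Christoffel--Darboux kernel attached to the matrix pencil $(\mathbf{A}_{n},\mathbf{B}_{n})$ and to extract the identity directly from the generalized eigenvalue relation \eqref{Eq-Matrix-equality} of Theorem \ref{Thm-GenEigenProblem}, rather than by a ground-up induction. First I would dispose of the base case $n=1$ by direct substitution: with $P_0\equiv 1$ and $P_1(\xt)=\xt-c_1$, the numerator of $G_1$ telescopes to $(\xt-c_1)-(\ys-c_1)=\xt-\ys$, so $G_1(\xt,\ys)=1$.

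For $n\geq 2$ the heart of the argument is to evaluate the scalar $\mathbf{u}_{n}(\ys)^{H}\mathbf{A}_{n}\mathbf{u}_{n}(\xt)$ in two ways. Multiplying \eqref{Eq-Matrix-equality} on the left by $\mathbf{u}_{n}(\ys)^{H}$ gives $\xt\,\mathbf{u}_{n}(\ys)^{H}\mathbf{B}_{n}\mathbf{u}_{n}(\xt)$ plus the boundary term $\sqrt{d_{n+1}}(\xt-i)\Fi_{n}(\xt)\,\overline{\Fi_{n-1}(\ys)}$, using $\mathbf{u}_{n}(\ys)^{H}\mathbf{e}_{n}=\overline{\Fi_{n-1}(\ys)}$. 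Since $\mathbf{A}_{n},\mathbf{B}_{n}$ are Hermitian and $\ys$ is real, taking the conjugate transpose of \eqref{Eq-Matrix-equality} written at $\ys$ yields $\mathbf{u}_{n}(\ys)^{H}\mathbf{A}_{n}=\ys\,\mathbf{u}_{n}(\ys)^{H}\mathbf{B}_{n}+\sqrt{d_{n+1}}(\ys+i)\overline{\Fi_{n}(\ys)}\,\mathbf{e}_{n}^{T}$; multiplying this on the right by $\mathbf{u}_{n}(\xt)$ gives a second expression with boundary term $\sqrt{d_{n+1}}(\ys+i)\overline{\Fi_{n}(\ys)}\,\Fi_{n-1}(\xt)$. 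Equating the two evaluations and cancelling the common $\mathbf{u}_{n}(\ys)^{H}\mathbf{A}_{n}\mathbf{u}_{n}(\xt)$ leaves
\[
(\xt-\ys)\,\mathbf{u}_{n}(\ys)^{H}\mathbf{B}_{n}\mathbf{u}_{n}(\xt)=\sqrt{d_{n+1}}\big[(\ys+i)\,\overline{\Fi_{n}(\ys)}\,\Fi_{n-1}(\xt)-(\xt-i)\,\Fi_{n}(\xt)\,\overline{\Fi_{n-1}(\ys)}\big].
\]

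Next I would substitute the definition \eqref{Eq-Rationals-1}, noting that for real $\ys$ conjugation merely replaces $(\ys-i)^{k}$ by $(\ys+i)^{k}$ while leaving $P_k(\ys)$ and the radicals unchanged. The powers of $(\xt-i)$ and $(\ys+i)$ together with the products of $\sqrt{d_j}$ then collapse: the bracket reduces to a constant multiple of $P_{n}(\ys)P_{n-1}(\xt)-P_{n}(\xt)P_{n-1}(\ys)$, the outer $\sqrt{d_{n+1}}$ cancels its matching radical, and, keeping track of signs via $P_{n}(\ys)P_{n-1}(\xt)-P_{n}(\xt)P_{n-1}(\ys)=-(\xt-\ys)G_n(\xt,\ys)$, the whole right-hand side equals $(\xt-\ys)\,G_n(\xt,\ys)\,[(\xt-i)^{n-1}(\ys+i)^{n-1}d_2\cdots d_n]^{-1}$. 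Dividing by $\xt-\ys$ yields the first displayed identity. The Wronskian statement then follows by letting $\ys\to\xt$: a Taylor expansion shows $G_n(\xt,\xt)=\mathcal{G}_n(\xt)$ and $(\xt-i)^{n-1}(\xt+i)^{n-1}=(\xt^2+1)^{n-1}$, while strict positivity is immediate since $\mathbf{B}_{n}$ is positive definite (as recorded just after Theorem \ref{Thm-GenEigenProblem}) and $\mathbf{u}_{n}(\xt)\neq 0$ because its first component $\Fi_{0}(\xt)=1$.

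I do not expect a genuine obstacle: once the two-sided evaluation is set up the computation is essentially forced. The only delicate points are the bookkeeping of the two sign flips (one from conjugating $(\ys-i)^{k}$, one from the antisymmetry of $G_n$) and the collapse of the $\sqrt{d_{j}}$ factors, together with the observation that the identity, initially derived only for $\xt\neq\ys$, extends to $\xt=\ys$ by continuity because both sides are polynomials after clearing the $\xt-\ys$ denominator. Should one prefer not to invoke Theorem \ref{Thm-GenEigenProblem}, an alternative is a direct induction on $n$ using \eqref{Eq-Special-R2Type-RR}, but the eigenvalue-pencil route is shorter and makes the positivity transparent.
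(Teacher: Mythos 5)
Your proposal is correct and is essentially the paper's own argument: the paper also evaluates the two-sided contraction of \eqref{Eq-Matrix-equality} against $\mathbf{u}_{n}(\ys)$ (packaged there as equating traces of $\mathbf{A}_{n}\mathbf{u}_{n}(\xt)\mathbf{u}_{n}(\ys)^{H}$ and $\mathbf{u}_{n}(\xt)\mathbf{u}_{n}(\ys)^{H}\mathbf{A}_{n}$, which is the same scalar identity you obtain directly), then substitutes \eqref{Eq-Rationals-1} and lets $\ys\to\xt$ with the positive definiteness of $\mathbf{B}_{n}$. Your only departures are cosmetic --- bypassing the trace formulation in favor of the bilinear form, and explicitly noting $\mathbf{u}_{n}(\xt)\neq 0$ via $\Fi_{0}(\xt)=1$, a point the paper leaves implicit.
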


\begin{proof}[\bf Proof]
Post-multiplication of \eqref{Eq-Matrix-equality} by the conjugate transpose of $\mathbf{u}_{n}(\ys)$ gives
\begin{equation} \label{Eq-traceApp1}
  \mathbf{A}_{n} \mathbf{u}_{n}(\xt)\, \mathbf{u}_{n}(\ys)^{H} = \xt\,  \mathbf{B}_{n} \mathbf{u}_{n}(\xt)\, \mathbf{u}_{n}(\ys)^{H}+ \sqrt{d_{n+1}}(\xt-i) \Fi_{n}(\xt)\,\mathbf{e}_{n}\, \mathbf{u}_{n}(\ys)^{H}.
\end{equation}
Likewise, pre-multiplication of  the conjugate transpose of \eqref{Eq-Matrix-equality} given in terms of $\ys$ by $\mathbf{u}_{n}(\xt)$ gives
\begin{equation} \label{Eq-traceApp2}
  \mathbf{u}_{n}(\xt)\,\mathbf{u}_{n}(\ys)^{H}\mathbf{A}_{n}  = \ys\, \mathbf{u}_{n}(\xt)\,\mathbf{u}_{n}(\ys)^{H} \mathbf{B}_{n} + \sqrt{d_{n+1}}(\ys+i) \overline{\Fi_{n}(\ys)}\, \mathbf{u}_{n}(\xt)\,\mathbf{e}_{n}^{T}.
\end{equation}
It is known that given any two matrices $\mathbf{M}$ and $\mathbf{N}$ if the  products $\mathbf{M}\mathbf{N}$ and $\mathbf{N}\mathbf{M}$ exist then $Tr(\mathbf{M}\mathbf{N}) = Tr(\mathbf{N}\mathbf{M})$.  Hence, by observing the equality in the  traces of the matrices on the left hand sides of \eqref{Eq-traceApp1} and \eqref{Eq-traceApp2}, we have 
\[ 
  \begin{array}l
    \dsp -\sqrt{d_{n+1}} \left[(\xt-i) \Fi_{n}(\xt) \overline{\Fi_{n-1}(\ys)}-(\ys+i)\overline{\Fi_{n}(\ys)} \Fi_{n-1}(\xt)\right]\\[2ex]
  \qquad \qquad  \dsp   = (\xt-\ys) \Big[\sum_{j=0}^{n-1} \overline{\Fi_{j}(\ys)} \Fi_{j}(\xt)  + \sum_{j=1}^{n-1} \sqrt{d_{j+1}}\big[\, \overline{\Fi_{j-1}(\ys)} \Fi_{j}(\xt)+ \overline{\Fi_{j}(\ys)}\Fi_{j-1}(\xt) \big]\Big].
  \end{array}
\]
We can easily verify that the expression on the above right hand side is equal to \linebreak $(\xt-\ys)\mathbf{u}_n(\ys)^{H} \mathbf{B}_{n} \mathbf{u}_{n}(\xt)$ and the term on left hand side is equal to  
\[  \begin{array}{l}
   \dsp  \frac{(\xt-\ys) G_{n}(\xt,\ys)}{(\xt-i)^{n-1}(\ys+i)^{n-1} d_2d_3 \cdots d_{n}}.
    \end{array}
\]
This proves the first part of the theorem. 

The latter part of the theorem follows from $\lim_{\ys \to \xt} G_{n}(\xt,\ys) = \mathcal{G}_{n}(\xt)$ and the positive definiteness of the matrix  $\mathbf{B}_{n}$.
\end{proof}

It is not difficult to verify that  the Cholesky decomposition $\mathbf{L}_{n} \mathbf{L}_{n}^{T}$ of the real positive definite matrix $\mathbf{B}_{n}$ is such that 
\[
   \mathbf{L}_{n} = \left[ 
       \begin{array}{cccccc}
       \sqrt{1-\ell_{1}} & 0 &  \cdots & 0 & 0  \\[1ex]
        \sqrt{\ell_2} & \!\!\!\sqrt{1-\ell_{2}} &  \cdots & 0 & 0 \\[1ex]
     %    0 & \sqrt{\ell_3} & \sqrt{1-\ell_{3}} & \cdots & 0 & 0  \\[1ex]
         \vdots & \vdots &    & \vdots & \vdots \\[1ex]
          0 & 0 &  \cdots & \!\!\!\sqrt{1-\ell_{n-1}} & 0  \\[1ex]
          0 & 0 &  \cdots & \sqrt{\ell_{n}} & \!\!\!\sqrt{1-\ell_{n}} \\[1ex]
      \end{array} \right],
\]
where $\{\ell_{n+1}\}_{n \geq 0}$ is the minimal parameter sequence of  $\{ d_{n+1}\}_{n \geq 1}$. 

Now consider the ordinary  eigenvalue problem 
\[
    \mathbf{L}_{n}^{-1} \mathbf{A}_{n} (\mathbf{L}_{n}^{-1})^{T} \mathbf{L}_{n}^{T} \mathbf{u}_{n}(\xt) = \xt \mathbf{L}_{n}^{T} \mathbf{u}_{n}(\xt), 
\]
which is equivalent to \eqref{Eq-Gen-Eigenproblem}. The eigenvectors are $\mathbf{L}_{n}^{T} \mathbf{u}_{n}(\xt_k^{(n)})$, $k=1,2, \ldots, n$ and the matrix involved  $\mathbf{L}_{n}^{-1} \mathbf{A}_{n} (\mathbf{L}_{n}^{-1})^{T}$ is Hermitian. Hence,  we must have that the eigenvectors  $\mathbf{L}_{n}^{T} \mathbf{u}_{n}(\xt_j^{(n)})$ and $\mathbf{L}_{n}^{T} \mathbf{u}_{n}(\xt_k^{(n)})$ are orthogonal to each other when $j \neq k$. We can also verify this from Theorem \ref{Thm-Positive-Wronskian} by taking $\xt = \xt_j^{(n)}$ and $\ys = \xt_k^{(n)}$.  That is, from Theorem \ref{Thm-Positive-Wronskian}, 
\[
    [\mathbf{L}_{n}^{T}\mathbf{u}_{n}(\xt_{k}^{(n)})]^{H}\, [\mathbf{L}_{n}^{T}\mathbf{u}_{n}(\xt_{j}^{(n)})] = \mathbf{u}_n(\xt_{k}^{(n)})^{H} \mathbf{B}_{n} \mathbf{u}_{n}(\xt_{j}^{(n)}) =  \frac{\mathcal{G}_{n}(\xt_{k}^{(n)})}{[(\xt_{k}^{(n)})^2+1]^{n-1} d_2 d_3 \cdots d_{n}} \, \delta_{j,k}.
\]

Thus, we consider the matrix  
\[
     \mathbf{Y}_{n} = \big[\mathbf{y}_{1}^{(n)}, \mathbf{y}_{2}^{(n)}, \ldots , \mathbf{y}_{n}^{(n)}\big] ,
\]
where 
\(
   \mathbf{y}_r^{(n)} = \sqrt{\lambda_{r}^{(n)}} \, \mathbf{L}_{n}^{T} \mathbf{u}_{n}(\xt_r^{(n)}) 
\)
and 
\begin{equation} \label{Eq-Lambda-rn}
     \lambda_{r}^{(n)} = \frac{[(\xt_r^{(n)})^2+1]^{n-1} d_2 d_3 \cdots d_{n}}{\mathcal{G}_{n}(\xt_r^{(n)})}. 
\end{equation}
Clearly $\mathbf{Y}_{n}^{H} \mathbf{Y}_{n} = \mathbf{I}_{n}$ and thus, $\mathbf{Y}_{n}^{H} = \mathbf{Y}_{n}^{-1}$.   

Now if we define the new rational functions $\hat{\Fi}_{k}(\xt)$ by 
\[
  \begin{array}{ll}
     \hat{\Fi}_{k}(\xt) &= \sqrt{\ell_{k+1}}\,\Fi_{k}(\xt) + \sqrt{1-\ell_{k}}\,\Fi_{k-1}(\xt),  \\[2ex]
     & = \frac{(-1)^{k} \sqrt{\ell_{k+1}}}{(x-i)^k \prod_{j=1}^{k}\sqrt{d_{j+1}}}\big[P_{k}(x) - (1-\ell_{k})(x-i) P_{k-1}(x)\big],
  \end{array} \quad k \geq 1,   
\]
then from $ \mathbf{Y}_{n} \mathbf{Y}_{n}^{H} = \mathbf{I}_{n}$ we have 
\begin{equation} \label{Eq-Discrete-Orthogonality-1}
   \begin{array}l
     \dsp \sum_{r=1}^{n} \lambda_{r}^{(n)}\, \overline{ \hat{\Fi}_{m}(\xt_{r}^{(n)})}\, \hat{\Fi}_{k}(\xt_{r}^{(n)})  = \delta_{m,k}, 
   \end{array}
\end{equation}
for $m,k = 1,2, \ldots, n$. 

The transformation 
\begin{equation} \label{Eq-Transformation}
     \zeta = \zeta(\xt) = \frac{\xt+i}{\xt-i},
\end{equation}
maps the real line $(-\infty, \infty)$ on to the (cut) unit circle $\T = \{ \zeta = e^{i\theta}, 0 < \theta < 2 \pi\}$.  The inverse of this transformation is $\xt(\zeta) = i(\zeta+1)/(\zeta-1)$.  

With this transformation let $\{R_{n}\}_{n \geq 0}$ be given by  $R_{0}(\zeta) = \Fi_{0}(\xt) = 1$ and 
\begin{equation} \label{Eq-P-to-R}
      R_{k}(\zeta) =  \frac{2^{k} P_{k}(\xt)}{(\xt-i)^{k}} = \Fi_{k}(\xt)\, (-2)^{k}\prod_{j=1}^{k}\sqrt{d_{j+1}}  , \quad k \geq 1.  
\end{equation}
Clearly, $R_{k}(\zeta)$ is a polynomial in $\zeta$ of exact degree $k$. To be precise, from the recurrence relation \eqref{Eq-Special-R2Type-RR} we find $R_{1}(\zeta) = (1+ic_{1})\zeta + (1-i c_{1})$ and 
\begin{equation} \label{Eq-TTRR-R}
    R_{k+1}(\zeta) = [(1+ic_{k+1})\zeta + (1-ic_{k+1})] R_{k}(\zeta) - 4 d_{k+1} \zeta R_{k-1}(\zeta), \quad k \geq 1.
\end{equation}
Hence, the leading coefficient of $R_{k}$ is $\prod_{j=1}^{k} (1+ic_{j})$. We also have
\[
     \hat{\Fi}_{k}(\xt) = \frac{(-1)^{k} \sqrt{\ell_{k+1}}}{2^{k} \prod_{j=1}^{k}\sqrt{d_{j+1}}}\, \hat{R}_{k}(\zeta), \quad k \geq 1, 
\]
where $\hat{R}_{k}(z) = R_{k}(z)- 2(1-\ell_{k}) R_{k-1}(z)$. Hence, from \eqref{Eq-Discrete-Orthogonality-1}
\begin{equation} \label{Eq-Discrete-Orthogonality-2}
  \begin{array}{l}
   \dsp  \sum_{r=1}^{n} \lambda_{r}^{(n)}\, \overline{\hat{R}_{m}(\zeta_{r}^{(n)})}\, \hat{R}_{k}(\zeta_{r}^{(n)})
   = \frac{2^{2k} \prod_{j=1}^{k}d_{j+1}}{\ell_{k+1}}\, \delta_{m,k},
  \end{array}
\end{equation}
for $m,k = 1,2, \ldots, n$, where
\[
     \zeta_{r}^{(n)} = \frac{\xt_{r}^{(n)}+i}{\xt_{r}^{(n)}-i}, \quad r =1,2, \ldots, n.
\]
Moreover, since $\lim_{\xt \to \infty} \hat{\Fi}_{k}(\xt) = 0$, we also have $\hat{R}_{k}(1)= 0$.  Hence, we introduce the polynomials 
\begin{equation} \label{Eq-OPUC-Relation1}
    \Fii_{k-1}(z)  = \frac{1}{\prod_{j=1}^{k} (1+ic_{j})}\frac{\hat{R}_{k}(z)}{z-1}, \quad k \geq 1.
\end{equation}
Note that $\Fii_{k}$ is a monic polynomial of degree $k$.  With the transformation \eqref{Eq-Transformation} one can also write  
\begin{equation} \label{Eq-OPUC-Relation2}
    \Fii_{k-1}(\zeta) = \frac{-i\,2^{k-1}}{\prod_{j=1}^{k}(1+ic_{j})}\frac{1}{(x-i)^{k-1}}\big[P_{k}(x) - (1-\ell_{k}) (x-i) P_{k-1} \big], \quad k \geq 1.
\end{equation}

\begin{theo} \label{Thm-Orthogonality1-UC}
Given the  $R_{II}$ type recurrence \eqref{Eq-Special-R2Type-RR} let $\mu$ be the positive measure on the unit circle such that its Verblunsky coefficients are 
\begin{equation} \label{Eq-Verblunsky-Characterization}
    \alpha_{n-1} = - \frac{1}{\tau_{n}}\,\frac{1-2\ell_{n+1} - i c_{n+1}}{1 - i c_{n+1}},  \quad n \geq 1,
\end{equation} 
where $\tau_{0}= 1$ and $\tau_{n} =  \tau_{n-1}(1-ic_n)/(1+ic_n)$, $n \geq 1$. The polynomials $\{\Phi_n\}_{n \geq 0}$ given by \eqref{Eq-OPUC-Relation2} are the monic orthogonal polynomials on the unit circle that satisfy 
\[  
   \int_{\T} \overline{\Fii_{m}(\zeta)}\,\Fii_{k}(\zeta)\,  d \Muii(\zeta) = \frac{(1+c_{1}^2)\,2^{2k} \prod_{j=1}^{k+1}d_{j+1}}{\ell_{k+2} \prod_{j=1}^{k+1} (1+c_{j}^2)}\, \delta_{m,k} . 
\]
Moreover, if $\{d_{n+1}\}_{n \geq 1}$ is a positive chain sequence with multiple parameter sequences then the measure $\Muii$ is such that $\int_{\T} |\zeta-1|^{-2} d \Muii(\zeta)$ exists. 

\end{theo}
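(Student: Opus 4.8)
The plan is to recognise $\{\Phi_n\}$ as the monic orthogonal polynomials of $\mu$ by verifying that they obey the Szeg\H{o} recurrence with the Verblunsky coefficients \eqref{Eq-Verblunsky-Characterization}, and then to read off the norms from the standard product formula. First I would check that \eqref{Eq-Verblunsky-Characterization} really defines a measure: since $|\tau_n|=1$ one has $|\alpha_{n-1}|^2 = \big[(1-2\ell_{n+1})^2+c_{n+1}^2\big]/(1+c_{n+1}^2)$, which is $<1$ precisely because $0<\ell_{n+1}<1$. Thus Verblunsky's theorem furnishes a unique probability measure $\mu$ whose monic orthogonal polynomials satisfy $\Phi_k = \zeta\Phi_{k-1} - \overline{\alpha_{k-1}}\,\Phi_{k-1}^{\ast}$; it then suffices to show that the polynomials defined in \eqref{Eq-OPUC-Relation2} satisfy this same recurrence with the same initial data $\Phi_0\equiv 1$.

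The key step is an induction establishing the para-orthogonal representation $R_k(\zeta) = \kappa_k\big[\zeta\Phi_{k-1}(\zeta)+\tau_k\Phi_{k-1}^{\ast}(\zeta)\big]$ with $\kappa_k=\prod_{j=1}^{k}(1+ic_j)$, the base case $k=1$ being immediate from $R_1$ and $\Phi_0\equiv1$; here the identity $\kappa_k\tau_k=\overline{\kappa_k}$ accounts for the self-inversive symmetry of $R_k$. For the inductive step I would eliminate $R_{k-1}$ from \eqref{Eq-TTRR-R} via $R_{k-1}=(R_k-\hat R_k)/(2(1-\ell_k))$ and use $d_{k+1}=(1-\ell_k)\ell_{k+1}$; the coefficient multiplying $R_k$ should collapse to $[(1-2\ell_{k+1})+ic_{k+1}](\zeta-1)$, giving $\hat R_{k+1}=[(1-2\ell_{k+1})+ic_{k+1}](\zeta-1)R_k+2\ell_{k+1}\zeta\hat R_k$. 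Substituting $\hat R_{k+1}=\kappa_{k+1}(\zeta-1)\Phi_k$, $\hat R_k=\kappa_k(\zeta-1)\Phi_{k-1}$ and the representation of $R_k$, then dividing by $(\zeta-1)\kappa_k$ and collecting the $\zeta\Phi_{k-1}$ and $\Phi_{k-1}^{\ast}$ terms, the $\zeta\Phi_{k-1}$ coefficient simplifies to $1+ic_{k+1}$ and one is left with $\Phi_k=\zeta\Phi_{k-1}-\overline{\alpha_{k-1}}\,\Phi_{k-1}^{\ast}$, where $\overline{\alpha_{k-1}}=-\tau_k\,[(1-2\ell_{k+1})+ic_{k+1}]/(1+ic_{k+1})$ is exactly the conjugate of \eqref{Eq-Verblunsky-Characterization}. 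The $\ast$-recurrence and the representation at level $k+1$ then follow from two elementary identities (using $\tau_{k+1}=\tau_k(1-ic_{k+1})/(1+ic_{k+1})$), closing the induction and identifying $\{\Phi_n\}$ with the monic OPUC of $\mu$.

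For the norms, since $\mu$ is a probability measure $\|\Phi_k\|^2=\prod_{j=0}^{k-1}(1-|\alpha_j|^2)$, and the computation above gives $1-|\alpha_{n-1}|^2=4\ell_{n+1}(1-\ell_{n+1})/(1+c_{n+1}^2)$. I would then telescope, using $\ell_1=0$ and $d_{n+1}=(1-\ell_n)\ell_{n+1}$, the products $\prod_{n=1}^{k}\ell_{n+1}(1-\ell_{n+1}) = \ell_{k+2}^{-1}\prod_{n=1}^{k+1}d_{n+1}$ and $\prod_{n=1}^{k}(1+c_{n+1}^2)=(1+c_1^2)^{-1}\prod_{j=1}^{k+1}(1+c_j^2)$, which turns $\|\Phi_k\|^2$ into exactly the stated right-hand side; the vanishing for $m\ne k$ is just the orthogonality of the $\Phi_n$.

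The last assertion is where I expect the real work. Under \eqref{Eq-Transformation} one has $|\zeta-1|^2=4/(x^2+1)$, so $\int_{\T}|\zeta-1|^{-2}d\mu$ equals $\tfrac14\int_{\R}(x^2+1)\,d\tilde{\mu}$, the weighted second moment of the push-forward $\tilde{\mu}$ of $\mu$ to the real line. The natural route is to use the discrete measures $\mu_n=\sum_r\lambda_r^{(n)}\delta_{\zeta_r^{(n)}}$ from \eqref{Eq-Discrete-Orthogonality-2}, which converge weak-$*$ to $\mu$; since $|\zeta-1|^{-2}$ is nonnegative and lower semicontinuous, one gets $\int_{\T}|\zeta-1|^{-2}d\mu\le\liminf_n \tfrac14\sum_r\lambda_r^{(n)}\big((x_r^{(n)})^2+1\big)$, so it is enough to bound these quadrature second moments uniformly in $n$. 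The obstacle is to show that this uniform bound holds exactly under the hypothesis, i.e.\ when Wall's series $\sum_n\prod_{j=1}^{n}\ell_{j+1}/(1-\ell_{j+1})$ converges (the multiple-parameter-sequence condition). I would try to obtain a recursion for $\sum_r\lambda_r^{(n)}((x_r^{(n)})^2+1)$ in the $\ell_j$ that telescopes against the chain-sequence products; failing a clean closed form, the cleaner alternative is to invoke the companion measure associated with the maximal parameter sequence $\{M_{n+1}\}$ (the measure $\nu$ of Theorem \ref{Thm-Special-R2Type-MF}) and the identity relating $\nu$ to $d\mu/|\zeta-1|^2$, which exhibits the latter as a finite measure. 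Establishing that companion-measure identity is the genuinely technical ingredient, and it is precisely the link developed in Section \ref{Sec-OPUC-to-POPUC2} using \cite{BracRangaSwami-2016}.
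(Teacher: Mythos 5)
Your treatment of the first assertion is correct, but it runs in the opposite direction to the paper's own proof. You start from the coefficients \eqref{Eq-Verblunsky-Characterization}, check $|\alpha_{n-1}|<1$, obtain $\mu$ from Verblunsky's theorem, and then verify that the polynomials \eqref{Eq-OPUC-Relation1} satisfy the Szeg\H{o} recurrence. Your computations check out: the identity $\hat{R}_{k+1}(\zeta)=[(1-2\ell_{k+1})+ic_{k+1}](\zeta-1)R_k(\zeta)+2\ell_{k+1}\zeta\hat{R}_k(\zeta)$ does follow from \eqref{Eq-TTRR-R} with $d_{k+1}=(1-\ell_k)\ell_{k+1}$, the coefficient of $\zeta\Phi_{k-1}$ collapses to $1+ic_{k+1}$ yielding exactly \eqref{Eq-Verblunsky-Characterization}, and the telescoping $\prod_{n=1}^{k}\ell_{n+1}(1-\ell_{n+1})=\ell_{k+2}^{-1}\prod_{n=1}^{k+1}d_{n+1}$ (valid because $\ell_1=0$) turns $\prod_{n=1}^{k}\big(1-|\alpha_{n-1}|^2\big)$ into the stated norm. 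This is essentially the alternative proof via para-orthogonality that the paper's own remark attributes to \cite{BracRangaSwami-2016}. The paper instead constructs $\mu$ from the generalized eigenvalue problem: it normalizes the discrete orthogonality \eqref{Eq-Discrete-Orthogonality-2} into probability measures $\Lambda_n$ supported on the zeros $\zeta_r^{(n)}$, shows the moments stabilize for $n>k$, extracts a weak-$\ast$ limit, and only afterwards reads off the Verblunsky coefficients from $-\overline{\Phi_n(0)}$. Your route buys uniqueness of $\mu$ for free; the paper's route keeps the quadrature rules attached to $\mu$, which is precisely the tool it then uses for the second assertion.

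For the second assertion there is a genuine gap. You correctly reduce, via lower semicontinuity (a point on which you are actually more careful than the paper, and the resulting inequality does suffice for existence), to a uniform bound on $\int_{\T}|\zeta-1|^{-2}\,d\Lambda_n=\frac{1}{4}(1+c_1^2)\sum_{r=1}^{n}\lambda_r^{(n)}$, but you do not produce the bound, and the closed form you hope for is exactly the crux of the paper's proof: one expands $1=\sum_{k=1}^{n-1}b_k\hat{R}_k(z)-2b_n(1-\ell_n)R_{n-1}(z)$ in the basis $\{\hat{R}_1,\dots,\hat{R}_{n-1},R_{n-1}\}$, finds $b_k=-2^{-k}/\prod_{j=1}^{k}(1-\ell_j)$ as in \eqref{Eq-Cobination-for-1}, and applies the discrete orthogonality \eqref{Eq-Discrete-Orthogonality-2} (a Parseval identity) to get
\[
\int_{\T}\frac{1}{|\zeta-1|^2}\,d\Lambda_n(\zeta)=\frac{1}{4}(1+c_1^2)\Big[1+\sum_{k=2}^{n}\prod_{j=2}^{k}\frac{\ell_j}{1-\ell_j}\Big],
\]
which is uniformly bounded precisely when Wall's series converges, i.e.\ when $\{d_{n+1}\}_{n\geq 1}$ has multiple parameter sequences. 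Moreover, your proposed fallback --- invoking the companion measure $\nu$ of Theorem \ref{Thm-Special-R2Type-MF} and the identity $d\nu=\const\,|\zeta-1|^{-2}d\mu$ --- is circular within this paper: the proof of Theorem \ref{Thm-Special-R2Type-MF} begins by citing the very assertion of Theorem \ref{Thm-Orthogonality1-UC} you are trying to establish in order to know that $|\zeta-1|^{-2}d\mu$ is finite and $\nu$ exists. Citing the corresponding statement of \cite[Thm.~4.1]{BracRangaSwami-2016} directly would be legitimate, but as written, the ingredient you yourself flag as ``genuinely technical'' is exactly the step left undone.
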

\begin{remark}
 The measure $\Muii$ characterized by the Verblunsky coefficients in \eqref{Eq-Verblunsky-Characterization} may or may not be such that the integral $\int_{\T} |\zeta-1|^{-2} d \Muii(\zeta)$ exists.  However, this is definitely the case if $\{d_{n+1}\}_{n \geq 1}$ is a positive chain sequence with multiple parameter sequences.  
\end{remark}

\begin{proof}[\bf Proof of Theorem \ref{Thm-Orthogonality1-UC}] 

Observe that $z-1 = \hat{R}_{1}(z)/(1+ic_1)$.  Hence, from \eqref{Eq-Discrete-Orthogonality-2}, 
\[
      \sum_{r=1}^{n} \lambda_{r}^{(n)} |\zeta_{r}^{(n)} - 1|^2 = \frac{1}{1+c_1^2} \sum_{r=1}^{n} \lambda_{r}^{(n)} |\hat{R}_{1}(\zeta_{r}^{(n)})|^2 = \frac{4}{1+c_1^2}.
\]
Thus, we rewrite \eqref{Eq-Discrete-Orthogonality-2} as   
\[ 
  \begin{array}{l}
   \dsp  \sum_{r=1}^{n} \hat{\lambda}_{r}^{(n)}\, \overline{\Fii_{m}(\zeta_{r}^{(n)})}\,\Fii_{k}(\zeta_{r}^{(n)})
   = \frac{(1+c_1^2)\,2^{2k} \prod_{j=1}^{k+1}d_{j+1}}{\ell_{k+2} \prod_{j=1}^{k+1} (1+c_{j}^2)}\, \delta_{m,k}, 
  \end{array}
\]
for $m, k = 0,1, \ldots,n-1$, where 
\[
     \hat{\lambda}_{r}^{(n)} = \frac{1}{4} (1+c_1^2)\,|\zeta_{r}^{(n)} - 1|^2\lambda_{r}^{(n)} > 0, \quad r=1,2,\ldots, n
\]
and $\sum_{r=1}^{n} \hat{\lambda}_{r}^{(n)} = 1$. Therefore,  if we introduce the series of probability measures 
\[
      \Lambda_{n}(\zeta) = \sum_{r=1}^{n} \hat{\lambda}_{r}^{(n)}\, \delta_{\zeta_{r}^{(n)}}
\]
where $\delta_{w}$ is the Dirac measure concentrated at the point $w$, then 
\[
   \int_{\T} \overline{\Fii_{m}(\zeta)}\,\Fii_{k}(\zeta)\,  d \Lambda_{n}(\zeta) =  \frac{(1+c_1^2)\, 2^{2k} \prod_{j=1}^{k+1}d_{j+1}}{\ell_{k+2} \prod_{j=1}^{k+1} (1+c_{j}^2)}\, \delta_{m,k}, 
\]
for $m, k = 0,1, \ldots,n-1$ and $n \geq 2$. It is also not difficult to verify that by expressing the monomial $\zeta^{k}$ as a linear combination of the set of monic polynomials $\{\Fii_{j}\}_{j=0}^{k}$ that 
\[
      \int_{\T}{\zeta^k} d \Lambda_{n}(\zeta) = \int_{\T}{\zeta^k} d \Lambda_{n+m}(\zeta) \quad \mbox{and} \quad \int_{\T}{\overline{\zeta}^k} d \Lambda_{n}(\zeta) = \int_{\T}{\overline{\zeta}^k} d \Lambda_{n+m}(\zeta), 
\]
for $n > k$ and $m \geq 1$.  Thus, we could conclude that  the sequence $\{\Lambda_{n}\}$ has a {\em subsequence} that converges to a probability measure $\mu$ (see, for example, \cite{Billingsley-Book}) and that 
\[
      \int_{\T}{\zeta^k} d \Lambda_{n}(\zeta) = \int_{\T}{\zeta^k} d \Muii(\zeta) \quad \mbox{and} \quad \int_{\T}{\overline{\zeta}^k} d \Lambda_{n}(\zeta) = \int_{\T}{\overline{\zeta}^k} d \Muii(\zeta), 
\]
for $n > k$.  Thus,  the orthogonality relation given in  the theorem holds. The Verblunsky coefficients are then obtain from $-\overline{\Phi_n(0)}$.  

Now to prove the latter part of the theorem, note that
\[ 
  \begin{array}{l}
   \dsp \int_{\T}\frac{1}{|\zeta-1|^2} d \Lambda_{n}(\zeta)  = \sum_{r=1}^{n} \hat{\lambda}_{r}^{(n)} \frac{1}{|\zeta_{r}^{(n)}-1|^2}   = \frac{1}{4} (1+c_1^2)  \sum_{r=1}^{n} \lambda_{r}^{(n)} , \quad n \geq 2.
 \end{array}
\]
Since one can verify that the set of polynomials  $\{\hat{R}_{1}, \hat{R}_{2}, \ldots, \hat{R}_{n-1}, R_{n-1}\}$ form a basis in $\Pset_{n-1}$,  by considering the linear combination
\[
     1 = b_1 \hat{R}_{1}(z) + b_2 \hat{R}_{2}(z) + \ldots + b_{n-1} \hat{R}_{n-1}(z) - 2b_{n} (1-\ell_n) R_{n-1}(z),
\]
we find 
\begin{equation}  \label{Eq-Cobination-for-1}
   b_k= \frac{-1}{2^{k}(1-\ell_1)\cdots (1-\ell_{k})}, \quad k=1,2, \ldots, n.
\end{equation}
Hence, by observing   
\[
1 = \sum_{k=1}^{n-1} b_{k} \hat{R}_{k}(\zeta_{r}^{(n)}) \ - 2b_{n}(1-\ell_{n})R_{n-1}(\zeta_{r}^{(n)}) = \sum_{k=1}^{n} b_{k} \hat{R}_{k}(\zeta_{r}^{(n)}) = \sum_{k=1}^{n} b_{k} \overline{\hat{R}_{k}(\zeta_{r}^{(n)})}, 
\]
we can write  
\[ 
  \begin{array}{l}
   \dsp \int_{\T}\frac{1}{|\zeta-1|^2} d \Lambda_{n}(\zeta)   = \frac{1}{4} (1+c_1^2)  \sum_{r=1}^{n} \lambda_{r}^{(n)} \Big[ \sum_{k=1}^{n} b_{k} \overline{\hat{R}_{k}(\zeta_{r}^{(n)})}\sum_{k=1}^{n} b_{k} \hat{R}_{k}(\zeta_{r}^{(n)}) \Big], \quad n \geq 2. 
 \end{array}
\]
Thus, by using the orthogonality \eqref{Eq-Discrete-Orthogonality-2} and then the values for $b_{k}$  in \eqref{Eq-Cobination-for-1},
\[
    \begin{array}{ll}
        \dsp \int_{\T}\frac{1}{|\zeta-1|^2} d \Lambda_{n}(\zeta)  & \dsp = \frac{1}{4} (1+c_1^2) \sum_{k=1}^{n}b_{k}^2 \sum_{r=1}^{n} \lambda_{r}^{(n)}|\hat{R}_{k}(\zeta_{r}^{(n)})|^2  \\[2.5ex]
       &\dsp  = \frac{1}{4} (1+c_1^2) \Big[1 + \sum_{k=2}^{n} \prod_{j=2}^{k}\frac{\ell_{j}}{(1-\ell_{j})} \Big], \quad n \geq 2.
     \end{array}
\]
By Wall's criteria  for the maximal parameter sequence of a positive chain sequence (see \cite[p.101]{Chihara-book}), the series $\sum_{k=2}^{\infty} \prod_{j=2}^{k}\frac{\ell_{j}}{(1-\ell_{j})}$ converges if $\{ d_{n+1}\}_{n \geq 1}$ is a multiple parameter positive chain sequence and diverges if $\{ d_{n+1}\}_{n \geq 1}$ is a single parameter positive chain sequence. Thus, if $\{ d_{n+1}\}_{n \geq 1}$ is a multiple parameter positive chain sequence then 
\[
        \dsp \int_{\T}\frac{1}{|\zeta-1|^2} d \mu(\zeta)    = \frac{1}{4} (1+c_1^2) \Big[1 + \sum_{k=2}^{\infty} \prod_{j=2}^{k}\frac{\ell_{j}}{(1-\ell_{j})} \Big] 
\]
is finite, which concludes the second part of the theorem. 
\end{proof}

\begin{remark}
  The initial part of Theorem \ref{Thm-Orthogonality1-UC} is a kind of Favard type theorem on the unit circle, where a measure $\mu$ is derived from the pair of sequences $[\{c_n\}_{n \geq 1}, \{d_{n+1}\}_{n \geq 1}]$.  See an alternative proof of this in \cite{BracRangaSwami-2016}.  The classical Favard theorem on the unit circle is where one starts with the Verblunsky coefficients to derive the measure $\mu$. See \cite{ENZG1} for a simple proof the classical Favard theorem on the unit circle.  
\end{remark}

We now state the above theorem in terms of the rational functions $\hat{\Fi}_{n}(\xt) = \linebreak \sqrt{\ell_{n+1}}\, \Fi_{n}(\xt) + \sqrt{1-\ell_{n}}\, \Fi_{n-1}(\xt)$, $ n \geq 1$. 

\begin{coro} Let the positive measure $\Muii$ be as in Theorem \ref{Thm-Orthogonality1-UC} and let $\psi$, defined on the real line, be given by $d\psi(\xt) = -d\Muii((\xt+i)/(\xt-i))$. Then the following hold.   
\[
     \int_{-\infty}^{\infty} \hat{\Fi}_{m}(\xt)\, \hat{\Fi}_{k}(\xt)\, (\xt^2+1) \, d \psi(\xt) = (1+c_1^2) \delta_{m,k}, 
\]
for $m, n = 1, 2, \ldots$.

\end{coro}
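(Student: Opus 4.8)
The plan is to reduce the corollary to the circle orthogonality of Theorem~\ref{Thm-Orthogonality1-UC} by transporting the integral from the real line to $\T$ through the Cayley map \eqref{Eq-Transformation}, $\zeta=(\xt+i)/(\xt-i)$. Because $d\psi(\xt)=-d\Muii(\zeta(\xt))$ and, as $\xt$ increases from $-\infty$ to $+\infty$, the argument of $\zeta$ decreases monotonically from $2\pi$ to $0$, the minus sign exactly offsets the reversal of orientation; I would first record that, consequently, $\int_{-\infty}^{\infty} g(\zeta(\xt))\,d\psi(\xt)=\int_{\T\setminus\{1\}} g(\zeta)\,d\Muii(\zeta)$ for every bounded Borel $g$ on $\T$.

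Next I would convert the integrand into one in $\zeta$. From $\zeta-1=2i/(\xt-i)$ one reads off, for real $\xt$, the identities $|\zeta-1|^2=4/(\xt^2+1)$ and $\xt^2+1=-4\zeta/(\zeta-1)^2$. Combining the definition of $\hat{\Fi}_k$ with the factorization $\hat{R}_k(z)=(z-1)\prod_{j=1}^{k}(1+ic_j)\,\Fii_{k-1}(z)$ implicit in \eqref{Eq-OPUC-Relation1} and with \eqref{Eq-OPUC-Relation2}, I would establish the pointwise relation $\hat{\Fi}_k(\xt)=C_k\,(\zeta-1)\,\Fii_{k-1}(\zeta)$, where $C_k=(-1)^k\sqrt{\ell_{k+1}}\,\prod_{j=1}^{k}(1+ic_j)\big/\big(2^{k}\prod_{j=1}^{k}\sqrt{d_{j+1}}\big)$. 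Since $\overline{\zeta-1}=-(\zeta-1)/\zeta$ on $\T$, the product entering the discrete orthogonality \eqref{Eq-Discrete-Orthogonality-1} then satisfies $\overline{\hat{\Fi}_m(\xt)}\,\hat{\Fi}_k(\xt)\,(\xt^2+1)=4\,\overline{C_m}\,C_k\,\overline{\Fii_{m-1}(\zeta)}\,\Fii_{k-1}(\zeta)$, the weight $\xt^2+1$ being absorbed by $|\zeta-1|^2$; crucially the right-hand side is a Laurent polynomial, hence bounded and continuous on all of $\T$, the point $\zeta=1$ included.

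Combining the two steps turns the left-hand side into $4\,\overline{C_m}\,C_k\int_{\T\setminus\{1\}}\overline{\Fii_{m-1}}\,\Fii_{k-1}\,d\Muii$, which Theorem~\ref{Thm-Orthogonality1-UC}, applied with the indices lowered by one, evaluates. The rest is bookkeeping: substituting $|C_k|^2=\ell_{k+1}\prod_{j=1}^{k}(1+c_j^2)\big/\big(2^{2k}\prod_{j=1}^{k}d_{j+1}\big)$ into the normalization $\tfrac{(1+c_1^2)\,2^{2(k-1)}\prod_{j=1}^{k}d_{j+1}}{\ell_{k+1}\prod_{j=1}^{k}(1+c_j^2)}$ of the theorem makes every $k$-dependent factor cancel, leaving precisely $(1+c_1^2)\delta_{m,k}$. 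I would note in passing a second, self-contained route that avoids the limit measure: the weight identity $\lambda_r^{(n)}=\big((\xt_r^{(n)})^2+1\big)\,\hat\lambda_r^{(n)}/(1+c_1^2)$, which follows from \eqref{Eq-Lambda-rn} and the definition of $\hat\lambda_r^{(n)}$, rewrites \eqref{Eq-Discrete-Orthogonality-1} as $\sum_r \hat\lambda_r^{(n)}\,\overline{\hat{\Fi}_m}\,\hat{\Fi}_k\,(\xt^2+1)=(1+c_1^2)\delta_{m,k}$, and since the associated circle integrand is a fixed Laurent polynomial whose $\Lambda_n$-moments stabilize for $n>\max(m,k)$ the same value transfers to $\Muii$.

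The step I expect to be the real obstacle, rather than routine algebra, is the behaviour at the single point $\zeta=1$, the image of $\xt=\pm\infty$. The change of variables only sees $\T\setminus\{1\}$, so the real-line integral equals the full circle integral only up to the correction $-4\,\overline{C_m}\,C_k\,\overline{\Fii_{m-1}(1)}\,\Fii_{k-1}(1)\,\Muii(\{1\})$, and the clean value $(1+c_1^2)\delta_{m,k}$ therefore demands $\Muii(\{1\})=0$. This is exactly what is guaranteed when $\{d_{n+1}\}_{n\geq 1}$ has multiple parameter sequences, for then the finiteness of $\int_{\T}|\zeta-1|^{-2}\,d\Muii$ from the second part of Theorem~\ref{Thm-Orthogonality1-UC} precludes any mass of $\Muii$ at $\zeta=1$; I would make this assumption explicit and dispose of the boundary point at that stage.
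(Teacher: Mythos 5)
Your proposal is correct and follows the same route the paper (implicitly) takes: the paper offers no separate proof of this corollary, presenting it as a direct restatement of Theorem \ref{Thm-Orthogonality1-UC} under the Cayley map \eqref{Eq-Transformation}, which is exactly your first two steps, and your constant bookkeeping ($|C_k|^2=\ell_{k+1}\prod_{j=1}^{k}(1+c_j^2)/(2^{2k}\prod_{j=1}^{k}d_{j+1})$ against the norm in the theorem, via $|\zeta-1|^2(\xt^2+1)=4$) checks out. Two of your refinements are genuine improvements on the paper's statement rather than pedantry. First, the conjugation: the corollary as printed has $\hat{\Fi}_m(\xt)\,\hat{\Fi}_k(\xt)$ with no bar, and that version is actually false --- in Example 1 one gets $\overline{\hat{\Fi}_m(\xt)}\,\hat{\Fi}_k(\xt)(\xt^2+1)=(-1)^{m+k}\zeta^{k-m}$, giving $\delta_{m,k}$ as claimed, whereas without the bar the integrand is $-\zeta^{m+k-1}$, which integrates to $0$ even for $m=k$; your Hermitian reading, forced by \eqref{Eq-Discrete-Orthogonality-1}, is the correct one. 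Second, the point $\zeta=1$: since zeros of orthogonal polynomials on the unit circle lie strictly inside the disk, $\Fii_{k-1}(1)\neq 0$ always, so a mass $\Muii(\{1\})>0$ really does destroy the clean identity by the correction term you compute; and such measures do arise from Theorem \ref{Thm-Orthogonality1-UC}, because by the correspondence of Section \ref{Sec-OPUC-to-POPUC2} every nontrivial probability measure (including ones with an atom at $1$) is produced by some admissible pair $[\{c_n\},\{d_{n+1}\}]$. So the corollary needs either the hypothesis that $\{d_{n+1}\}_{n\geq 1}$ has multiple parameter sequences (whence $\int_{\T}|\zeta-1|^{-2}d\Muii<\infty$ and no atom at $1$), as you propose, or the weaker explicit hypothesis $\Muii(\{1\})=0$ --- note the multiple-parameter condition is sufficient but not necessary, since Example 1 (single-parameter, Lebesgue measure) satisfies the corollary. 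Your alternative route via the discrete orthogonality and stabilized moments is also sound, but, as you observe, it meets the same boundary-point issue when transferring from $\T$ back to the real line.
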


\begin{remark}
The sequence of polynomials $\{R_n\}_{n \geq 1}$ are in fact a sequence of para-orthogonal polynomials  associated with the sequence of orthogonal polynomials on the unit circle $\{\Phi_{n}\}_{n \geq 0}$. 
\end{remark}

\begin{remark}
According to the results shown in \cite{Ismail-Masson-JAT1995}, there is a moment functional $\mathcal{L}$ such that  $\mathcal{L}[\xt^{k}P_n(\xt)] = 0$ for $0 \leq k < n$.  What can we say about moment functional $\mathcal{L}$? 
\end{remark}

The above remarks  will become clearer with results presented in the next section.    

%%%%%%%%%%%%%%%%%%%%%%%%%%%%%%%%%%%%%%%%%%%%%%%%%%%%%%%%%%%%%%% 
\setcounter{equation}{0}
\section{Para-orthogonal polynomials to $R_{II}$ type recurrence  }  \label{Sec-OPUC-to-POPUC2}

From \eqref{Eq-Verblunsky-Characterization} it is easily verified that
\begin{equation} \label{Eq-Tau-Recurrence-0}
    \tau_1 =  \frac{1-ic_1}{1+ic_1} \quad \mbox{and} \quad \tau_{n+1} = \frac{\tau_n + \overline{\alpha}_{n-1}}{1+ \tau_{n}\alpha_{n-1}}, \ \ n \geq 1.
\end{equation}
Moreover, we can also verify the following  formulas for $\{c_{n}\}_{n \geq 1}$ and $\{\ell_{n+1}\}_{n \geq 0}$ in terms of the sequences $\{\tau_{n}\}_{n \geq 1}$ and $\{\alpha_{n}\}_{n \geq 0}$.
\begin{equation*}\label{***}
  \begin{array}{ll}
    \dsp  c_{1} = i\frac{\tau_1 - 1}{\tau_1 + 1}, &  \dsp c_{n+1} =  \frac{ \mathcal{I}m(\tau_{n}\alpha_{n-1})} {1 + \mathcal{R}e(\tau_{n}\alpha_{n-1})}  = \frac{ \mathcal{I}m(\tau_{n+1}\alpha_{n-1})} {1 - \mathcal{R}e(\tau_{n+1}\alpha_{n-1})},  \quad n \geq 1  \\[3ex]
    & \dsp  \ell_{n+1} = \frac{1}{2}\, \frac{|1 + \tau_{n}  \alpha_{n-1}|^2}{1 + \mathcal{R}e( \tau_{n} \alpha_{n-1})} = \frac{1}{2}\, \frac{1 - |\tau_{n+1}  \alpha_{n-1}|^2}{1 - \mathcal{R}e( \tau_{n+1} \alpha_{n-1})}, \quad n \geq 1.
  \end{array}
\]
To verify the two different expressions for $c_{n+1}$ and $\ell_{n+1}$ we require
\[
    (1- \tau_{n+1}\alpha_{n-1})(1 + \tau_{n}\alpha_{n-1}) = 1 - |\alpha_{n-1}|^2, \quad n \geq 1,
\] 
which is a consequence of the recurrence formula in \eqref{Eq-Tau-Recurrence-0}. 

Hence, given the pair of sequences $\{c_{n}\}_{n \geq 1}$ and $\{d_{n+1}\}_{n \geq 1}$, the probability measure $\Muii$ on the unit circle given by Theorem \ref{Thm-Orthogonality1-UC} is unique.  That is, the measure $\Muii$ for which the polynomials $\Fii_{n}$ given by \eqref{Eq-OPUC-Relation1} are orthogonal, is unique. 

However, the reciprocal of the above observation is not true. There are infinitely many pairs of sequences $\{c_{n}\}_{n \geq 1}$ and $\{d_{n+1}\}_{n \geq 1}$ produce  the same measure $\Muii$ and the same orthogonal polynomials $\Fii_{n}$.  This follows from the following results established in \cite{BracRangaSwami-2016}.  

% \begin{theo}[\cite{BracRangaSwami-2016}] \label{Thm-BRS2016}
% Let $\{\alpha_n\}_{n=0}^{\infty}$ be such that $|\alpha_n| < 1$, $n\geq 0$, and let $\mu$ and 
% $\{\Phi_n\}$ be the respective nontrivial probability measure and sequence of OPUC for which 
% $\alpha_n$ are the Verblunsky coefficients.  

With any $\tau$ chosen such that $|\tau| = 1$,  $\tau \neq -1$, let the sequence of numbers $\{\tau_n\}_{n=0}^{\infty}$  be given by
\begin{equation*} \label{Eq-TauRecurence1}
   \tau_{1} = \tau \quad \mbox{and} \quad  \tau_{n+1} = \frac{\tau_{n} + \overline{\alpha}_{n-1}}{1 + \tau_{n} \alpha_{n-1}}, \quad n \geq 1,
\end{equation*}
where  $\alpha_{n-1} = - \overline{\Phi_n(0)}$, $n \geq 1$, are the Verblunsky coefficients.  Then the sequence  $\{R_n\}$ of para-orthogonal polynomials on the unit circle   given by
\[
   R_n(z) \frac{1+\tau_1}{2} \prod_{k=1}^{n-1} \frac{ 1 + \Re(\tau_{k}\alpha_{k-1})}{1 + \tau_{k} \alpha_{k-1}}  = z\Phi_{n-1}(z) + \tau_{n}\Phi_{n-1}^{\ast}(z), \quad n \geq 1,
\]
satisfy the three term recurrence formula \eqref{Eq-TTRR-R}, where  the real sequence $\{c_n\}$ and the positive chain sequence $\{d_{n+1}\}_{n=1}^{\infty} $ are such that
\begin{equation*}\label{Eq-CoeffsTTRR-11}
     c_{1}  = i\frac{\tau_1 - 1}{\tau_1 + 1}, \quad c_{n+1} =  \frac{ \Im(\tau_{n}\alpha_{n-1})} {1 + \Re(\tau_{n}\alpha_{n-1})}   \quad \mbox{and}  \quad d_{n+1} = (1-\ell_{n})\ell_{n+1}, \quad n \geq 1,
\end{equation*}
where  $\{\ell_{n+1}\}_{n\geq 0}$ is the minimal parameter sequence  of $\{d_{n+1}\}_{n \geq 1}$ given by
\[
    \ell_1 = 0 \quad \mbox{and} \quad   \ell_{n+1} = \frac{1}{2} \frac{|1 + \tau_{n}  \alpha_{n-1}|^2}{1 + \Re( \tau_{n} \alpha_{n-1})}, \quad n \geq 1.
\]
Moreover, the  measure $\Muii$ is such that the integral $\int_{\mathbb{T}} |\zeta-1|^{-2}d\Muii(\zeta)$  exists if and only if  there exists a $\tau$ $(|\tau| = 1, \tau \neq -1)$ such that the corresponding positive chain sequence $\{d_{n+1}\}_{n \geq 1}$ is not a single parameter positive chain sequence.

\begin{remark}
  The above results  are  precisely those given in \cite[Thm.\,4.1]{BracRangaSwami-2016}, where the sequence $\{\rho_{n}\}_{n \geq 0}$ that appear in \cite[Thm.\,4.1]{BracRangaSwami-2016} is such that $\rho_{n} = - \tau_{n+1}$, $n \geq 0$.    
\end{remark} 

The following Theorem, also given  in \cite{BracRangaSwami-2016}, is a restatement of the above results under the additional assumption  that  the measure $\Muii$ belongs to the class of measures for which the principal value integral 

\begin{equation} \label{Eq-PVI-UC}
       I(\Muii) = \dashint_\mathcal{\T} \frac{\zeta}{\zeta-1} d \Muii(\zeta) = \lim_{\epsilon \to 0} \int_{\epsilon}^{2\pi - \epsilon} \frac{e^{i\theta}}{e^{i\theta}-1} d\Muii(e^{i \theta}),
\end{equation}
exists. Observe that this class of measures also includes measures for which the integral  $\int_{\mathbb{T}} |\zeta-1|^{-2}d\Muii(\zeta)$ does not exist.  The simplest example of such measures is the Lebesque measure $d\mu(\zeta) = (2\pi i \zeta)^{-1} d\zeta$, where $I = 1/2$ but $\int_{\mathbb{T}} |\zeta-1|^{-2}(2\pi i \zeta)^{-1} d\zeta$ does not exist.

\begin{theo}[\cite{BracRangaSwami-2016}] \label{Thm-BRS2016-2}
  Let $\Muii$ be a nontrivial probability measure on the unit circle such that $I = I(\Muii)$ exists and  let  $\{\Phi_n\}_{n=0}^{\infty}$ be the  sequence of monic orthogonal polynomials on the unit circle with respect to $\Muii$.  With any  $s$ be such that $-\infty < s < \infty$,  let the sequence  $\{\tau_{n}(s)\}_{n=1}^{\infty}$ be given by
\[
	     \dsp \tau_{1}(s)=  \frac{I+is}{\overline{I} - is}\quad \mbox{and} \quad  \tau_{n+1}(s) = \frac{\tau_{n}(s) + \overline{\alpha}_{n-1}}{1 + \tau_{n}(s) \alpha_{n-1}}, \quad n \geq 1.
\]
Then for the sequence  $\{R_n(s;z)\}$ of para orthogonal polynomials, in $z$,  given by 
\[
   \quad R_n(s;z)  \frac{1+\tau_1(s)}{2} \prod_{k=1}^{n-1} \frac{ 1 + \Re[\tau_{k}(s)\alpha_{k-1}]}{1 + \tau_{k}(s) \alpha_{k-1}} = z\Phi_{n-1}(z) + \tau_{n}(s)\Phi_{n-1}^{\ast}(z), \quad n \geq 1,
\]
the following three term recurrence formula holds.
\begin{equation*}
   R_{n+1}(s;z) = \big[(1+ic_{n+1}(s))z + (1-ic_{n+1}(s))\big] R_{n}(s;z) - 4\,d_{n+1}(s) z R_{n-1}(s;z),
\end{equation*}
with $R_{0}(s;z) = 1$ and $R_{1}(s;z) = (1+ic_{1}(s))z + (1-ic_{1}(s))$,  where the real sequences $\{c_n(s)\}$ and $\{d_{n+1}(s)\}$ are such that
\begin{equation*}\label{Eq-CoeffsTTRR-12}
  \begin{array}l
     \dsp c_{1}(s) = i\frac{\tau_1(s) - 1}{\tau_1(s) + 1}   =  -2[s + \Im(I)]  \quad \mbox{and} \\ [2ex]
     \dsp c_{n+1}(s)=  \frac{ \Im[\tau_{n}(s)\alpha_{n-1}]} {1 + \Re[\tau_{n}(s)\alpha_{n-1}]}, \ \  d_{n+1}(s) =  [1-\ell_{n}(s)]\ell_{n+1}(s), \ \ n \geq 1.
   \end{array}
\end{equation*}
Here,  $\{d_{n+1} (s)\}_{n=1}^{\infty}$ is a positive chain sequence and   $\{\ell_{n+1}(s)\}_{n=0}^{\infty}$ is its minimal parameter sequence,  which is given by
\[
    \ell_{1}(s) = 0 \quad \mbox{and} \quad   \ell_{n+1}(s) = \frac{1}{2}\, \frac{|1 + \tau_{n}(s)  \alpha_{n-1}|^2}{1 + \Re\big( \tau_{n}(s) \alpha_{n-1}\big)}, \quad n \geq 1.
\]
With respect to the measure $\Muii$, the polynomials $R_n(s;.)$ also satisfy the orthogonality property  $\mathcal{N}^{(s)}[z^{-n+k}R_n(s;z)] = 0$, $k=0,1,\ldots, n-1$,
with respect to the moment fuctional $\mathcal{N}^{(s)}$ given by
\begin{equation} \label{Eq-TheoMain4}
     \mathcal{N}^{(s)}[f(z)]  = \dashint_{\mathbb{T}} f(\zeta) \frac{\zeta}{\zeta-1} d\Muii (\zeta) + i\,s\, f(1).
\end{equation} 

\end{theo}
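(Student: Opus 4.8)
The plan is to read Theorem \ref{Thm-BRS2016-2} as a specialization of the general construction quoted immediately above it from \cite{BracRangaSwami-2016}, so that the three-term recurrence, the positivity of the chain sequence $\{d_{n+1}(s)\}$, and the formulas for $\ell_{n+1}(s)$ and $c_{n+1}(s)$ ($n\ge 1$) are inherited simply by taking the free parameter $\tau$ to be $\tau_1(s)=(I+is)/(\bar I-is)$. The first thing I would check is that this choice is admissible, i.e. $|\tau_1(s)|=1$ and $\tau_1(s)\neq -1$. The modulus condition is immediate from $\overline{I+is}=\bar I-is$ (as $s$ is real), which gives $|\tau_1(s)|=|I+is|/|\bar I-is|=1$. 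For $\tau_1(s)\neq -1$ I would first record the identity $\Re I=\tfrac12$, which follows by putting $\zeta=e^{i\theta}$ and computing $\Re\frac{\zeta}{\zeta-1}=\tfrac12$ pointwise on $\T$, so that $\Re I=\tfrac12\int_{\T}d\mu=\tfrac12$; since $\tau_1(s)=-1$ would force $\Re I=0$, admissibility follows. With $\tau=\tau_1(s)$ fixed, the cited results then deliver everything in the statement except the alternate formula $c_1(s)=-2[s+\Im I]$ and the orthogonality.

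Next I would verify $c_1(s)=-2[s+\Im I]$ by direct substitution into $c_1(s)=i(\tau_1(s)-1)/(\tau_1(s)+1)$. Using $\Re I=\tfrac12$ one finds $\tau_1(s)+1=(I+\bar I)/(\bar I-is)=1/(\bar I-is)$ and $\tau_1(s)-1=2i(\Im I+s)/(\bar I-is)$, whence $(\tau_1(s)-1)/(\tau_1(s)+1)=2i(\Im I+s)$ and $c_1(s)=-2(\Im I+s)$. This is a short self-contained computation and also anchors the base case of the induction below.

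The substantive new content is the orthogonality $\mathcal N^{(s)}[z^{-n+k}R_n(s;z)]=0$, $k=0,\dots,n-1$. Writing $R_n(s;z)$ (up to the nonzero normalizing constant, which is irrelevant for an identity equal to zero) as $z\Phi_{n-1}(z)+\tau_n(s)\Phi_{n-1}^{\ast}(z)$ and using the on-circle identity $\Phi_{n-1}^{\ast}(\zeta)=\zeta^{n-1}\overline{\Phi_{n-1}(\zeta)}$, I would split the kernel as $\frac{\zeta}{\zeta-1}=1+\frac{1}{\zeta-1}$, so that $\mathcal N^{(s)}[z^{-n+k}R_n]$ becomes the sum of a plain integral $\int_{\T}\zeta^{-n+k}R_n\,d\mu$, a principal value integral $\dashint_{\T}\frac{\zeta^{-n+k}R_n}{\zeta-1}\,d\mu$, and the term $is\,R_n(s;1)$. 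For the plain integral, the Szeg\H{o} orthogonality $\int_{\T}\zeta^{-m}\Phi_{n-1}\,d\mu=0$ ($0\le m\le n-2$) together with its conjugate shows it vanishes for $k=1,\dots,n-1$ and reduces to an explicit $\|\Phi_{n-1}\|^2$-term for $k=0$. For the principal value integral I would isolate the singularity at $\zeta=1$ by the decomposition $\frac{g(\zeta)}{\zeta-1}=\frac{g(\zeta)-g(1)}{\zeta-1}+\frac{g(1)}{\zeta-1}$ with $g(\zeta)=\zeta^{-n+k}R_n(\zeta)$; the first summand is a Laurent polynomial whose $\mu$-integral is again governed by Szeg\H{o} orthogonality, while the second contributes $g(1)\dashint_{\T}\frac{d\mu}{\zeta-1}=g(1)(I-1)$, using $\dashint_{\T}\frac{d\mu}{\zeta-1}=I-1$ (a consequence of $\dashint_{\T}\frac{\zeta-1}{\zeta-1}\,d\mu=1$).

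The final step, and the main obstacle, is to show that the residual non-orthogonal contributions (the surviving $k=0$ norm term, the $g(1)(I-1)$ terms, and $is\,R_n(s;1)$) cancel exactly. This is precisely where the choice $\tau_1(s)=(I+is)/(\bar I-is)$ and the M\"obius recurrence for $\tau_n(s)$ do their work, since they encode the single extra orthogonality condition (the $k=0$, i.e. $z^{-n}$, case) beyond the $n-1$ conditions that hold automatically for any para-orthogonal polynomial. To organize this cleanly I expect it is easier to argue by induction on $n$ using the three-term recurrence: applying the linear functional $\mathcal N^{(s)}$ to the recurrence collapses the claim for $R_{n+1}$ to the two boundary identities $(1+ic_{n+1})\mathcal N^{(s)}[R_n]-4d_{n+1}\mathcal N^{(s)}[R_{n-1}]=0$ (the $k=n$ case) and $(1-ic_{n+1})\mathcal N^{(s)}[z^{-(n+1)}R_n]-4d_{n+1}\mathcal N^{(s)}[z^{-n}R_{n-1}]=0$ (the $k=0$ case), which one then evaluates using the self-inversive symmetry $\overline{R_n(\zeta)}=\overline{\tau_n}\,\zeta^{-n}R_n(\zeta)$ on $\T$ together with the value $\Re I=\tfrac12$. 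Checking the base cases $n=1,2$ directly from $R_1(s;z)=(1+ic_1(s))z+(1-ic_1(s))$ and $c_1(s)=-2[s+\Im I]$ completes the argument.
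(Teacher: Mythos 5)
The first thing to note is that the paper contains no proof of this theorem to compare against: it is imported from \cite{BracRangaSwami-2016} (the text introduces it as a ``restatement of the above results'' from that reference), so the only in-paper checkpoints are the remarks surrounding the statement. Your preliminary steps match those checkpoints and are correct: $|\tau_1(s)|=1$ since $\overline{I+is}=\overline{I}-is$ for real $s$; $\Re(I)=\tfrac12$ because $\Re\big(\zeta/(\zeta-1)\big)=\tfrac12$ pointwise on $\T$ (the paper records the same fact as $I+\overline{I}=\mu_0=1$), hence $\tau_1(s)\neq -1$; and your computation $c_1(s)=-2[s+\Im(I)]$ is right. Reading the recurrence, the positivity of the chain sequence and the formulas for $c_{n+1}(s)$, $\ell_{n+1}(s)$ as the specialization $\tau=\tau_1(s)$ of the $\tau$-parametrized construction quoted just before the theorem is also exactly the paper's framing, so that part is legitimately inherited.

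However, for the orthogonality $\mathcal{N}^{(s)}[z^{-n+k}R_n(s;z)]=0$ --- the only part of the statement not already contained in that quoted block --- your argument stops at a plan, and this is a genuine gap. The inductive reduction to the two boundary identities $(1+ic_{n+1})\mathcal{N}^{(s)}[R_n]=4d_{n+1}\mathcal{N}^{(s)}[R_{n-1}]$ and $(1-ic_{n+1})\mathcal{N}^{(s)}[z^{-(n+1)}R_n]=4d_{n+1}\mathcal{N}^{(s)}[z^{-n}R_{n-1}]$ is correct bookkeeping (the middle $k$ do follow from the inductive hypothesis), and the base case even works: for $n=1$, $k=0$ one finds $\mathcal{N}^{(s)}[z^{-1}R_1]=i\,[2\Im(I)+c_1(s)+2s]$, which vanishes precisely by your $c_1$ formula. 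But for $n\geq 2$ the boundary identities involve $\mathcal{N}^{(s)}[R_n]$ and $\mathcal{N}^{(s)}[z^{-(n+1)}R_n]$, which carry the surviving norm term $\int_{\T}\zeta^{-(n-1)}\Phi_{n-1}\,d\mu=\|\Phi_{n-1}\|^2$, the principal-value contributions $g(1)(I-1)$, and the point term $is\,R_n(s;1)$; you assert these ``cancel exactly'' because the M\"obius recurrence for $\tau_n(s)$ ``does its work,'' but establishing that cancellation \emph{is} the content of the theorem, so as written the step is circular. To close it you would need closed-form evaluations of $\mathcal{N}^{(s)}[R_n]$ (in terms of $\tau_n(s)$, $\alpha_{n-1}$ and products of $1-\ell_k(s)$) checked against the defining formulas for $c_{n+1}(s)$ and $d_{n+1}(s)$. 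One further slip that would derail that computation if used: with the theorem's normalization the recurrence coefficients are conjugate-symmetric, so $R_n^{\ast}=R_n$ and on $\T$ one has $\overline{R_n(\zeta)}=\zeta^{-n}R_n(\zeta)$ with no $\overline{\tau_n}$ factor; the relation $\overline{X_n(\zeta)}=\overline{\tau_n}\,\zeta^{-n}X_n(\zeta)$ that you quote holds for the unnormalized $X_n=z\Phi_{n-1}+\tau_n\Phi_{n-1}^{\ast}$, whose normalizing constant is not real.
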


\vspace{1.5ex}

The orthogonal polynomials  $\{\Phi_n(z)\}_{n=0}^{\infty}$ can be recovered from $\{R_n(s;z)\}_{n=0}^{\infty}$ by
\[
     (z-1)\Phi_n(z)\prod_{k=1}^{n+1}\big(1+ic_{k}(s)\big) = R_{n+1}(s;z) - 2\big(1-\ell_{n+1}(s)\big)R_{n}(s;z), \quad n \geq 0,
\]
which we can identify with the relation \eqref{Eq-OPUC-Relation1}.  

Observe that $I$ in Theorem \ref{Thm-BRS2016-2} is such that $I + \overline{I} = \Muii_0 = 1$.  Hence, $\mathcal{R}e(I) = 1/2$ and $\tau_{1}(s)  \neq -1$ if $-\infty < s < \infty$.

It was also shown in \cite{BracRangaSwami-2016} that the sequence $\{d_{n+1} (s)\}_{n=1}^{\infty}$ given in Theorem \ref{Thm-BRS2016-2} is always a single parameter positive chain sequence  if $s \neq 0$.  However,  $\{d_{n+1} (0)\}_{n=1}^{\infty}$ is  a single parameter positive chain sequence if and only if the integral $\int_{\mathbb{T}} |\zeta-1|^{-2}d\Muii(\zeta)$ does not exists. \\[-1ex]

\noindent {\bf Somes remarks with respect to the measure obtained in Theorem \ref{Thm-Orthogonality1-UC}}. 
\begin{itemize}

\item There are infinitely many pairs $[\{c_{n}\}_{n \geq 1}, \{d_{n+1}\}_{n \geq 1}]$, where the $\{c_{n}\}_{n \geq 1}$ is a real sequence and $\{d_{n+1}\}_{n \geq 1}$ is a positive chain sequence, considered as in  Theorem \ref{Thm-Orthogonality1-UC} produce the same measure $\Muii$.  However, as we have shown above, there is a one to one correspondence between any pair $[\{c_{n}\}_{n \geq 1}, \{d_{n+1}\}_{n \geq 1}]$ and the pair $[\{\alpha_{n}\}_{n \geq 0}, \tau]$, where $\{\alpha_{n}\}_{n \geq 0}$ is the sequence of Verblunsky coefficients associated with the measure $\mu$ and $\tau = (1-ic_1)/(1+ic_1)$. 

\item If the resulting measure $\Muii$ is such that the principal value integral $I(\Muii)$ defined by \eqref{Eq-PVI-UC} exists, then each pair  $[\{c_{n}\}_{n \geq 1}, \{d_{n+1}\}_{n \geq 1}]$ that produces $\Muii$ can be identified to be equal to a pair  $[\{c_{n}(s)\}_{n \geq 1}, \{d_{n+1}(s)\}_{n \geq 1}]$ given by Theorem \ref{Thm-BRS2016-2}, where $s = -\Im(I(\Muii)) - c_1/2$. 

\item In a pair $[\{c_{n}\}_{n \geq 1}, \{d_{n+1}\}_{n \geq 1}]$ that equals to $[\{c_{n}(s)\}_{n \geq 1}, \{d_{n+1}(s)\}_{n \geq 1}]$, where  $s \neq 0$, the positive chain sequence $\{d_{n+1}\}_{n \geq 1}$ can not be a multiple parameter positive chain sequence. However, in the pair equals to $[\{c_{n}(0)\}_{n \geq 1}, \{d_{n+1}(0)\}_{n \geq 1}]$ the sequence $\{d_{n+1}\}_{n \geq 1}$ is a multiple  parameter positive chain sequence only if $\Muii$ is such that integral $\int_{\T}|\zeta-1|^{-1} d \Muii(\zeta)$ exists.

\item Observe that if the  integral $\int_{\T}|\zeta-1|^{-1} d \Muii(\zeta)$ exists then the principal value integral $I(\Muii)$ also exists as a normal integral.  

\end{itemize}

Hence, if the measure $\Muii$ obtained from Theorem \ref{Thm-Orthogonality1-UC} is such that the principal value integral $I = I(\Muii)$ exists then from Theorem \ref{Thm-BRS2016-2} we have that the polynomials $R_{n}$ given by \eqref{Eq-TTRR-R} satisfy 
\begin{equation} \label{Eq-Lorthogonality-2} 
    \dashint_{\mathbb{T}} \zeta^{-n+k}R_{n}(\zeta)  \frac{\zeta}{\zeta-1} d\Muii (\zeta) - i\,[\Im(I)+c_{1}/2]\, R_{n}(1) = 0,  \quad 0 \leq k \leq n-1. 
\end{equation}

Now by considering  the transformation $\zeta = (\xt+i)/(\xt-i)$ we have 
\[
   I(\Muii) = \dashint_\mathcal{\T} \frac{\zeta}{\zeta-1} d \Muii(\zeta) 
   = \lim_{y \to \infty} \int_{-y}^{y} \frac{1}{2}(1 - i \xt) d \psi(\xt),
\]
where
\begin{equation} \label{Eq-RL-Measure}
     d \psi(\xt) = - d\Muii\big((\xt+i)/(\xt-i)\big). 
\end{equation}
Hence, by considering  the principal value integral on the real line defined by 
\begin{equation} \label{Eq-PVI-RealLine}
   \dashint_{-\infty}^{\infty} f(\xt) d\psi(\xt) = 
\lim_{y \to \infty} \int_{-y}^{y} f(\xt) d \psi(\xt),
\end{equation}
if  $I(\Muii)$ exists then 
\[
   \dashint_{-\infty}^{\infty} \xt d \psi(\xt) = -2 \Im(I(\Muii)). 
\]
Consequently, from \eqref{Eq-Lorthogonality-2},  
\begin{equation} \label{Eq-Ortogonality-P-3}
      \dashint_{-\infty}^{\infty} r_{k}^{(n)}(\xt) \frac{P_{n}(\xt)}{(\xt^2+1)^{n}} \,(\xt^2+1)  d \psi(\xt) + \big[c_{1} - \dashint_{-\infty}^{\infty} \xt\, d\psi(\xt)\big] \mathfrak{p}_{n} = 0, \quad 0 \leq k \leq n-1. 
\end{equation}
where $r_{k}^{(n)}(\xt) = (\xt+i)^{k} (\xt-i)^{n-1-k}$ and $\mathfrak{p}_n = (1-\ell_{1}) \cdots(1-\ell_{n})$ is the leading  coefficient of $P_{n}$.

\begin{theo} \label{Thm-PV-Orthonality-M}  Given the $R_{II}$ type recurrence, let  $\Muii$ be the probability measure on the unit circle  obtained as in Theorem \ref{Thm-Orthogonality1-UC}.  If the measure $\Muii$   is such that the principal value integral $ I(\Muii)$ defined by \eqref{Eq-PVI-UC} exists then with $\psi(\xt)$ given by \eqref{Eq-RL-Measure}  and with the principal value integral on the real line defined by \eqref{Eq-PVI-RealLine}, 
\[
      \dashint_{-\infty}^{\infty} \xt^{k} \, \frac{P_{n}(\xt)}{(\xt^2+1)^{n}} \, (\xt^2+1)  d \psi(\xt) = - \Big[c_{1} - \dashint_{-\infty}^{\infty} \xt\, d\psi(\xt)\Big] \mathfrak{p}_{n}\, \delta_{n-1,k}, \quad 0 \leq k \leq n-1,
\]
for $n \geq 1$.
\end{theo}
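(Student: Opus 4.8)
The plan is to derive the stated monomial orthogonality from relation \eqref{Eq-Ortogonality-P-3}, which is already in hand for the polynomials $r_k^{(n)}(\xt) = (\xt+i)^k(\xt-i)^{n-1-k}$, by a change of basis in $\Pset_{n-1}$. The crucial feature of \eqref{Eq-Ortogonality-P-3} that makes this work is that its right-hand side is one and the \emph{same} constant
\[
   C_n := -\Big[c_{1} - \dashint_{-\infty}^{\infty}\xt\,d\psi(\xt)\Big]\mathfrak{p}_n
\]
for every index $k$ with $0 \le k \le n-1$. Writing $\mathcal{M}_n[f] := \dashint_{-\infty}^{\infty} f(\xt)\,\frac{P_{n}(\xt)}{(\xt^2+1)^{n}}(\xt^2+1)\,d\psi(\xt)$ for the functional under study, relation \eqref{Eq-Ortogonality-P-3} reads simply $\mathcal{M}_n[r_k^{(n)}] = C_n$ for $0 \le k \le n-1$, and the goal becomes to evaluate $\mathcal{M}_n[\xt^k]$.

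First I would check that $\{r_0^{(n)}, r_1^{(n)}, \ldots, r_{n-1}^{(n)}\}$ is a basis of $\Pset_{n-1}$. Each $r_j^{(n)}$ has degree exactly $n-1$, and under the substitution $\zeta = (\xt+i)/(\xt-i)$ of \eqref{Eq-Transformation} one has $r_j^{(n)}(\xt) = (\xt-i)^{n-1}\zeta^{j}$; hence any linear relation among the $r_j^{(n)}$ forces a polynomial identity $\sum_j b_j \zeta^{j} \equiv 0$ in $\zeta$, so all $b_j = 0$ and the $n$ polynomials are independent. Expanding each monomial in this basis gives $\xt^k = \sum_{j=0}^{n-1} a_{k,j}\, r_j^{(n)}(\xt)$ for $0 \le k \le n-1$.

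The key computation is the value of the coefficient sum $\sum_{j=0}^{n-1} a_{k,j}$, since by linearity $\mathcal{M}_n[\xt^k] = \sum_j a_{k,j}\,\mathcal{M}_n[r_j^{(n)}] = C_n \sum_j a_{k,j}$. Dividing the expansion by $(\xt-i)^{n-1}$ and using the substitution again yields $\sum_j a_{k,j}\, \zeta^{j} = \xt^k/(\xt-i)^{n-1}$, and the sum of coefficients is recovered by evaluating at $\zeta = 1$, that is, by letting $\xt \to \infty$. Thus
\[
   \sum_{j=0}^{n-1} a_{k,j} = \lim_{\xt\to\infty}\frac{\xt^{k}}{(\xt-i)^{n-1}} = \delta_{n-1,k}, \quad 0 \le k \le n-1,
\]
the limit being $0$ for $k < n-1$ and $1$ for $k = n-1$. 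Substituting back gives $\mathcal{M}_n[\xt^k] = C_n\,\delta_{n-1,k}$, which is exactly the asserted identity.

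The one point requiring care, and the step I expect to be the main obstacle, is the use of \emph{linearity} of the principal value functional $\mathcal{M}_n$, since principal value integrals need not add in general. Here it is legitimate because $\dashint$ is a limit of genuine integrals over the symmetric intervals $[-y,y]$, over which integration is linear, and because each of the finitely many terms $\mathcal{M}_n[r_j^{(n)}]$ has a well-defined limit by \eqref{Eq-Ortogonality-P-3}; a finite linear combination of convergent limits then converges to the same combination of those limits. I would record this justification explicitly before passing from $\mathcal{M}_n[r_k^{(n)}]$ to $\mathcal{M}_n[\xt^k]$, after which the claimed formula, valid already for $n \ge 1$, follows immediately.
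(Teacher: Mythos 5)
Your proof is correct and takes essentially the same route as the paper: both expand $\xt^{k}$ in the basis $\{r_j^{(n)}\}_{j=0}^{n-1}$, apply the constant-valued relation \eqref{Eq-Ortogonality-P-3} term by term, and identify the coefficient sum as $\delta_{n-1,k}$ by letting $\xt \to \infty$ (your substitution $\zeta = (\xt+i)/(\xt-i)$ with evaluation at $\zeta = 1$ is the same limit in disguise). Your explicit checks of the linear independence of the $r_j^{(n)}$ and of the linearity of the principal value functional $\dashint$ are details the paper leaves implicit; they are welcome but do not change the argument.
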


\begin{proof}
Since the set of polynomials $\{r_j^{(n)}\}_{j=0}^{n-1}$ is linearly independent in $\mathbb{P}_{n-1}$ we can write 
\[
     \xt^{k} = \sum_{j=0}^{n-1}q_{k,j}^{(n)}\,r_j^{(n)}(\xt),  \quad 0 \leq k \leq n-1.   
 \]
 Hence, from \eqref{Eq-Ortogonality-P-3} 
\[
      \dashint_{-\infty}^{\infty} \xt^{k} \, \frac{P_{n}(\xt)}{(\xt^2+1)^{n}} \, (\xt^2+1)  d \psi(\xt) + \Big[c_{1} - \dashint_{-\infty}^{\infty} \xt\, d\psi(\xt)\Big] \mathfrak{p}_{n}\, \sum_{j=1}^{n-1}q_{k,j}^{(n)} = 0 , \quad 0 \leq k \leq n-1.
\]
Now considering the limit of $\xt^{-n+1} \sum_{j=0}^{n-1}q_{k,j}^{(n)}r_j^{(n)}(\xt)$ as $\xt \to \infty$ we find that $\sum_{j=0}^{n-1}q_{k,j}^{(n)} = 0$ for $k=0,1, \ldots, n-2$ and $\sum_{j=0}^{n-1}q_{n-1,j}^{(n)} = 1$, which gives the required result in the theorem. 
\end{proof}

We are now able to consider the proof of Theorem \ref{Thm-Special-R2Type-MF}. 

\begin{proof}[\bf Proof of Theorem \ref{Thm-Special-R2Type-MF}]  

Since the positive chain sequence $\{d_{n+1}\}_{n \geq 1}$ has multiple parameter sequences, from Theorem \ref{Thm-Orthogonality1-UC} the associated measure is such that the integral $\int_{\T} |\zeta-1|^{-2} d \Muii(\zeta)$ exists.  Moreover, from the remarks that follow Theorem \ref{Thm-BRS2016-2} the polynomials $R_{n}$ given by \eqref{Eq-TTRR-R} satisfy $R_n(z) = R_n(0; z)$, $n \geq 0$  and 
\begin{equation} \label{Eq-Ortho3-Rn}
    \int_{\mathbb{T}} \zeta^{-n+k}R_{n}(\zeta)  (1-\zeta) d \nu(\zeta)  = \mathcal{N}^{(0)}[z^{-n+k}R_n(0;z)]  = 0,  \quad 0 \leq k \leq n-1, 
\end{equation}
where the probability measure on the unit circle $\nu$ is given  by 
\[
    \const \, d \nu(\zeta) = \frac{\zeta}{(\zeta-1)(1-\zeta)} d \Muii(\zeta) = \frac{1}{|\zeta-1|^{2}} d \Muii(\zeta).   
\]
Now from results obtained in \cite{Costa-Felix-Ranga-JAT2013} and from some further observations made later (see for example \cite{BracRangaSwami-2016}), if $\{\beta_{n}\}_{n \geq 0}$ is the sequence of Verblunsky coefficients with respect to the measure $\nu$ then for the coefficients of  the recurrence relation \eqref{Eq-TTRR-R} of $R_{n}$  we also have 
\begin{equation}\label{Eq-CoeffsTTRR-2}
  \begin{array}{l}
    \dsp c_{n} = \frac{-\mathcal{I}m \big(\tau_{n-1}\beta_{n-1}\big)} {1-\mathcal{R}e\big(\tau_{n-1}\beta_{n-1}\big)}   \quad \mbox{and} \quad d_{n+1} =  \big(1-g_{n}\big)g_{n+1}, \quad n \geq 1
  \end{array} 
\end{equation}
and 
\[
    \frac{1}{\prod_{j=1}^{n}(1-g_{j})}R_n(z) = K_n(z, 1),
\]
where $K_n(z, w)$ represents the Christoffel-Darboux kernels with respect to the probability measure $\nu$. Here, $\tau_0 =1$,  
\[
     g_{n} = \frac{1}{2} \frac{\big|1 - \tau_{n-1} \beta_{n-1}\big|^2}{\big[1 - \mathcal{R}e\big(\tau_{n-1}\beta_{n-1}\big)\big]} \quad \mbox{and} \quad    \tau_{n}   = \frac{\tau_{n-1} - \overline{\beta}_{n-1}}{1 - \tau_{n-1} \beta_{n-1}}, \quad n \geq 1.
\]
Since the measure $\nu$ does not have a pure point at $z=1$, the above parameter sequence $\{g_{n+1}\}_{n \geq 0}$ of the positive chain sequence  $\{d_{n+1}\}_{n \geq 1}$ is also its  maximal parameter sequence $\{M_{n+1}\}_{n \geq 0}$.  

The following reciprocal formulas can also be easily verified.
\[
     \beta_{n-1} = \frac{1}{\tau_{n-1}}\,\frac{1-2M_{n} - i c_n}{1 - i c_n} \quad \mbox{and} \quad \tau_{n} =  \tau_{n-1}\frac{1-ic_n}{1+ic_n} 
\]
for $n \geq 1$, with $\tau_{0}= 1$.
 
Now we consider the transformation $\zeta = (\xt+i)/(\xt-i)$.  Then we have from \eqref{Eq-P-to-R} and \eqref{Eq-Ortho3-Rn}  
\[
       \int_{-\infty}^{\infty} \xt^{k} \, \frac{P_{n}(\xt)}{(\xt^2+1)^{n}} \, d \varphi(\xt)  = 0, \quad k=0,1, \ldots, n-1.
\]
where $d \varphi(\xt) = - d \nu((\xt+i)/(\xt-i))$. 

To obtain the value of $\gamma_{n} = \int_{-\infty}^{\infty}  \frac{\xt^{n}P_{n}(\xt)}{(\xt^2+1)^{n}} \, d \varphi(\xt)$, first we observe from the $R_{II}$ type recurrence \eqref{Eq-Special-R2Type-RR},
\[
     (\xt^2+1) \frac{P_{n+1}(\xt)}{(\xt^2+1)^{n+1}} = (\xt - c_{n+1}) \frac{P_{n}(\xt)}{(\xt^2+1)^{n}}  - d_{n+1} \frac{P_{n-1}(\xt)}{(\xt^2+1)^{n-1}}, \quad n \geq 1.
\]
Hence, $ \gamma_{n+1} = \gamma_{n} - d_{n+1} \gamma_{n-1}$, which is equivalent to 
\[
   d_{n+1} = \frac{\gamma_{n}}{\gamma_{n-1}}\big[1 - \frac{\gamma_{n+1}}{\gamma_{n}}  \big], \quad n \geq 1.
\]
Clarely, $\gamma_{0} = \int_{-\infty}^{\infty} d \varphi(\xt) = \int_{\T} d \nu(\zeta) = 1$. Thus, all we have to show is  $\gamma_{1} =  (1-M_1)$.  

With $\int_{-\infty}^{\infty}  \frac{P_{1}(\xt)}{(\xt^2+1)} \, d \varphi(\xt) = 0$ we can write 
\[
    \gamma_{1} = \int_{-\infty}^{\infty} (\xt+c_1) \frac{P_{1}(\xt)}{\xt^2+1} d \varphi(\xt) = 1 - (1+c_1^{2}) \int_{-\infty}^{\infty} \frac{1}{\xt^2+1} d \varphi(\xt).
\]
However,
\[
    \int_{-\infty}^{\infty} \frac{1}{\xt^2+1} d \varphi(\xt) = \frac{1}{4}\int_{\T} \zeta^{-1}(\zeta-1)^2 d \nu(\zeta) = \frac{1}{2} [1 - \Re(\tilde{\alpha}_0)].   
\]
Hence, together with the result for $c_1$ in \eqref{Eq-CoeffsTTRR-2} we find $\gamma_{1} =  (1-M_1)$. 
\end{proof}

%%%%%%%%%%%%%%%%%%%%%%%%%%%%%%%%%%%%%%%%%%%%%%%%%%%%%%%%%%%%%%% 
\setcounter{equation}{0}
\section{Examples } \label{Sec-Examples}

We justify the statements made  in Theorems \ref{Thm-Special-R2Type-MF} and \ref{Thm-Orthogonality1-UC}, and also the results given in Section \ref{Sec-OPUC-to-POPUC2} with the following examples.  \\[-0.5ex]

\noindent{\bf Example 1}. We consider the $R_{II}$ type recurrence \eqref{Eq-Special-R2Type-RR} with $c_n = 0$, $n \geq 1$, $d_{2} = 1/2$ and $d_{n+1} = 1/4$, $n \geq 2$. The sequence $\{d_{n+1}\}_{n\geq 1}$ is a single parameter positive chain sequence with its minimal parameter sequence $\{\ell_{n+1}\}_{n \geq 0} = \{0, 1/2, 1/2, 1/2, \ldots\}$.

From the theory of  difference equations it is easily verified that 
\[
       P_{n}(\xt) = \big(\frac{\xt-i}{2}\big)^n + \big(\frac{\xt+i}{2}\big)^n, \quad n \geq 1. 
\]
Hence, from \eqref{Eq-Rationals-1} we have 
\[
     \Fi_{0}(\xt) = 1, \quad \mbox{and} \quad \Fi_{n}(\xt)  = \frac{(-1)^n}{\sqrt{2}}\big[1 + \frac{(\xt+i)^n}{(\xt-i)^n}\big], \ \ n \geq 1. 
\]
From this we successively find from the results given in Section \ref{Sec-EVP-to-Orthogonality} that 
\[
     \hat{\Fi}_{n}(\xt) = \frac{(-1)^n}{2} \Big[\frac{(\xt+i)^{n}}{(\xt-i)^{n}} - \frac{(\xt+i)^{n-1}}{(\xt-i)^{n-1}} \Big], \ \ \ n \geq 1,
\]
\[
    R_{0}(z) = 1, \quad R_{n}(z) = z^n +1, \ \ n \geq 1, 
\]
\[
    \hat{R}_{n}(z) = z^{n} - z^{n-1}, \ \ \  n \geq 1 
\]
and 
\[
    \Phi_{n}(z) = z^{n}, \quad n \geq 0.
\]
Thus, clearly the probability measure $\mu$ that follows from Theorem \ref{Thm-Orthogonality1-UC} is the Lebesgue measure. That is, $d \mu(\zeta) = \frac{1}{2\pi i\zeta} d \zeta$ and all the associated Verblunsky coefficients are equal to $0$. 

For the Lebesgue  measure the integral $\int_{\T}|\zeta-1|^{-2} d \mu(\zeta)$ does not exist. However, we observe that the principal value integral $I(\mu)$ exists.  To be precise, we have
\[
   I(\mu) = \frac{1}{2\pi} \lim_{\epsilon \to 0} \Big[ \int_{0+\epsilon}^{\pi} \frac{e^{i\theta}}{e^{i\theta}-1} d \theta + \int_{0+\epsilon}^{\pi} \frac{e^{-i\theta}}{e^{-i\theta}-1} d \theta\Big] = \frac{1}{2\pi} \lim_{\epsilon \to 0} \int_{0+\epsilon}^{\pi} d\theta,
\]
from which $I(\mu) = 1/2$. Hence, from Theorem \ref{Thm-BRS2016-2} (see also \cite{BracRangaSwami-2016}) 
\[
     \tau_{n+1}(s) = \frac{1 +i 2s}{1-i2s} , \quad n \geq 0   
\]
and for the polynomials 
\[
   \quad R_n(s;z)  \frac{1+\tau_1(s)}{2} \prod_{k=1}^{n-1} \frac{ 1 + \Re[\tau_{k}(s)\alpha_{k-1}]}{1 + \tau_{k}(s) \alpha_{k-1}} = z\Phi_{n-1}(z) + \tau_{n}(s)\Phi_{n-1}^{\ast}(z), \quad n \geq 1,
\]
there hold  $R_{1}(s;z) = (1+ic_{1}(s))\zeta + (1-i c_{1}(s))$ and 
\[
    R_{k+1}(s;z) = [(1+ic_{k+1}(s))z + (1-ic_{k+1}(s))] R_{k}(z) - 4 d_{k+1}(s) z R_{k-1}(z), \quad k \geq 1, 
\]
where
\begin{equation*} \label{Eq-Example1}
      c_1(s) = -2s, \quad c_{n+1}(s) = 0,  \   n \geq 1, \qquad d_2(s) = d_2 = \frac{1}{2}, \quad d_{n+2}(s) = d_{n+1} = \frac{1}{4}, \  n  \geq 1.
\end{equation*} 
Thus, we conclude that with a choice of $s$ such that $-\infty < s < \infty$ the real sequence $\{c_n(s)\}_{n \geq 1} = \{-2s, 0, 0, 0, \ldots\}$ and and positive chain sequence $\{d_{n+1}\}_{n \geq 1} = \linebreak \{1/2, 1/4, 1/4, 1/4, \ldots\}$ used  together with Theorem \ref{Thm-Orthogonality1-UC} leads to the same measure which is the Lebesgue measure.  

If the sequence of polynomials $\{P_{n}(s;\xt)\}$ are  given by $P_{0}(s; \xt) = 1$, $P_{1}(s; \xt) = \xt - c_1(s)$ and 
\[
     P_{n+1}(s; \xt) = [\xt - c_{n+1}(s)] P_{n}(s;\xt)  - d_{n+1} (\xt^2 + 1) P_{n-1}(s;\xt), \quad n \geq 1.
\]
then from Theorem \ref{Thm-PV-Orthonality-M} we have 
\[
     d \psi(\xt) = \frac{1}{\pi} \frac{1}{\xt^2+1} d\xt, \qquad \dashint_{-\infty}^{\infty} \xt\, d \psi(\xt)=  \lim_{y \to \infty} \Big[\frac{1}{2\pi}\ln (\xt^2+1)\Big]_{-y}^{y} = 0 
\]
and
\[
      \frac{1}{\pi} \dashint_{-\infty}^{\infty} \xt^{k} \, \frac{P_{n}(s;\xt)}{(\xt^2+1)^{n}} \,   d \xt  =  - c_{1}(s) \mathfrak{p}_{n} \, \delta_{n-1, k}, \quad 0 \leq k \leq n-1,
\] \\[0ex]
where $\mathfrak{p}_n = (1-\ell_{1}) \cdots(1-\ell_{n}) = 1/2^{n-1}$. 

\vspace{4ex}

\noindent {\bf Example 2}.  With  $s$ such that $-\infty < s < \infty$, we consider the $R_{II}$ type recurrence \eqref{Eq-Special-R2Type-RR}, where  $c_n = c_n(s)$ and $d_{n+1} = d_{n+1}(s) = [1- \ell_{n}(s)] \ell_{n+1}(s)$, for $n \geq 1$, are given by 
 $c_{1}(s) = \kappa - 2s$, $\ell_{1}(s) =0$,    
\begin{equation*}   \label{Eq-c-psi1-2}
     \begin{array}{ll}
         \displaystyle c_{4m+2}(s) &\!\! = \displaystyle \frac{-\kappa}{1+4m\kappa}\,
\frac{(1-\kappa)^2[1+(4m+1)\kappa] + 4s\kappa(1-\kappa) - 4s^2 [1+(4m-1)\kappa]}
{(1-\kappa)^2[1+(4m+1)\kappa]+4s^2 [1+(4m-1)\kappa]},   
                     \\[3ex] 
         \displaystyle c_{4m+3}(s) &\!\! = \displaystyle \frac{-4s\kappa}{1+(4m+1)\kappa}\, 
\frac{(1-\kappa)[1+(4m+1)\kappa] + 2 s \kappa }
{(1-\kappa)^2[1+(4m+1)\kappa] + 4s\kappa(1-\kappa) + 4s^2[1+(4m+1)\kappa]},  
                     \\[3ex] 
         \displaystyle c_{4m+4}(s) &\!\! = \displaystyle \frac{\kappa}{1+(4m+2)\kappa}\, 
\frac{(1-\kappa)^2[1+(4m+1)\kappa] + 4s\kappa(1-\kappa)   - 4s^2 [1+(4m+3)\kappa]}
{(1-\kappa)^2[1+(4m+1)\kappa]+4s^2 [1+(4m+3)\kappa]},  
                    \\[3ex] 
         \displaystyle c_{4m+5}(s)  &\!\! = \displaystyle  \frac{-2\kappa (1-\kappa)}{1+(4m+3)\kappa}\, 
\frac{(1-\kappa) \kappa - 2 s [1+(4m+3)\kappa]}
{(1-\kappa)^2[1+(4m+3)\kappa] - 4s\kappa(1-\kappa) + 4s^2[1+(4m+3)\kappa]},
       \end{array}  
\end{equation*}
\[
    \begin{array}{l}
          \ell_{4m+2}(s)   = \dsp  \frac{1+(4m-1)\kappa}{2[1+(4m)\kappa]^2} \times \\[2ex]
   \dsp \quad  \frac{(1-\kappa)^2[1+(4m+1)\kappa]^2 +4s\kappa(1-\kappa)[1+(4m+1)\kappa] +4s^2\big[\kappa^2 + [1+(4m)\kappa]^2\big]}
{(1-\kappa)^2[1+(4m+1)\kappa]    + 4s^2[1+(4m-1)\kappa]},      
    \end{array}  
\]
\[
   \begin{array}{l} 
          \ell_{4m+3}(s)  =  \displaystyle \frac{1+(4m)\kappa}{2[1+(4m+1)\kappa]^2} \times \\[2ex]
  \displaystyle  \quad \frac{(1-\kappa)^2[1+(4m+1)\kappa]^2 +4s\kappa(1-\kappa)[1+(4m+1)\kappa] +4s^2\big[\kappa^2 + [1+(4m+2)\kappa]^2\big]}
{(1-\kappa)^2[1+(4m+1)\kappa] + 4s\kappa (1-\kappa) + 4s^2[1+(4m+1)\kappa]}, 
   \end{array}  
\]
\[
   \begin{array}{l}
      \ell_{4m+4}(s)  =  \displaystyle \frac{1+(4m+1)\kappa}{2[1+(4m+2)\kappa]^2} \times \\[2ex]
   \displaystyle  \quad \frac{(1-\kappa)^2\big[\kappa^2 + [1+(4m+2)\kappa]^2\big] -4s\kappa(1-\kappa)[1+(4m+3)\kappa] + 4s^2[1+(4m+3)\kappa]^2}
{(1-\kappa)^2[1+(4m+1)\kappa] + 4s^2[1+(4m+3)\kappa]} ,                
   \end{array}  
\]
and
\[
   \begin{array}{l}
         \ell_{4m+5}(s)   =  \displaystyle  \frac{1+(4m+2)\kappa}{2[1+(4m+3)\kappa]^2} \times \\[2ex]
  \displaystyle  \quad \frac{(1-\kappa)^2\big[\kappa^2 + [1+(4m+4)\kappa]^2\big] -4s\kappa(1-\kappa)[1+(4m+3)\kappa] +4s^2[1+(4m+3)\kappa]^2}
{(1-\kappa)^2[1+(4m+3)\kappa] - 4s\kappa (1-\kappa)  + 4s^2[1+(4m+3)\kappa]} ,
   \end{array}  
\]
for $m \geq 0$. Here, we take $\kappa$ to be such that  $0 < \kappa < 1$. We also denote by $P_{n}(s;x)$ the polynomials obtain from \eqref{Eq-Special-R2Type-RR}. 

Independent of the value of $s$,  the measure that follows from Theorem \ref{Thm-Orthogonality1-UC} is (see \cite{BracSilvaSR-AMC2015})  
\[
    d\mu(\zeta) = (1 - \kappa) \frac{1}{2\pi i \zeta} d \zeta + \kappa\, \delta_{i}. 
\]
The above probability measure $\mu$ is a modification of the Lebesgue measure so that it has  an isolated mass (pure point) of size $\kappa$ at the point $z = i$. 

For the principal value integral 
\[
   I(\mu) = \dashint_\mathcal{\T} \frac{\zeta}{\zeta-1} d \Muii(\zeta) = \frac{1-\kappa}{2\pi} \lim_{\epsilon \to 0} \Big[ \int_{0+\epsilon}^{2\pi-\epsilon} \frac{e^{i\theta}}{e^{i\theta}-1} d \theta\Big] + \frac{\kappa\, i}{i - 1},
\]
we obtain  $I(\mu) = (1 - i \kappa)/2$. Hence, by using the results given by Theorem  \ref{Thm-PV-Orthonality-M} we have  
\[
     d \psi(\xt) = \frac{1}{\pi} \frac{1-\kappa}{\xt^2+1} d\xt + \kappa\, \delta_{1}, \qquad \dashint_{-\infty}^{\infty} \xt\, d \psi(\xt)=  \kappa 
\]
and
\[
      \frac{1}{\pi} \dashint_{-\infty}^{\infty} \xt^{k} \, \frac{P_{n}(s;\xt)}{(\xt^2+1)^{n}} \, (x^2+1)  d \psi(\xt)  =  - [c_{1}(s) - \kappa] \mathfrak{p}_{n}(s) \, \delta_{n-1, k}, \quad 0 \leq k \leq n-1,
\]
where $\mathfrak{p}_n(s) = (1-\ell_{1}(s)) \cdots(1-\ell_{n}(s))$.  Here, $\delta_{1}$ represents the Dirac measure concentrated at the point $x=1$. 

\vspace{4ex}

\noindent {\bf Example 3}.  We now consider the $R_{II}$ type recurrence \eqref{Eq-Special-R2Type-RR} with 
\begin{equation} \label{Eq-Example3}
   c_n = 0 \quad \mbox{and} \quad d_{n+1} = 1/4, \quad n \geq 1. 
\end{equation}
The sequence $\{d_{n+1}\}_{n\geq 1}$ is not a single parameter positive chain sequence. Its   minimal parameter sequence is $\{\ell_{n+1}\}_{n \geq 0}$, where $\ell_{n+1} = n/(2(n+1))$, $n \geq 0$ and its maximal parameter sequence  $\{M_{n+1}\}_{n \geq 0}$ is such that  $M_{n+1} = 1/2$, $n \geq 0$.

Again from  the theory of  difference equations it is easily verified that 
\[
       P_{n}(\xt) = i \big(\frac{\xt-i}{2}\big)^{n+1}  - i \big(\frac{\xt+i}{2}\big)^{n+1}, \quad n \geq 0. 
\]
Hence, from \eqref{Eq-Rationals-1} we have 
\[
      \Fi_{n}(\xt)  = (-1)^n\frac{1}{2}i(\xt-i)\big[1 - \frac{(\xt+i)^{n+1}}{(\xt-i)^{n+1}}\big], \ \ n \geq 0. 
\]
From this we successively find from the results given in Section \ref{Sec-EVP-to-Orthogonality} that 
\[
     \hat{\Fi}_{n}(\xt) = (-1)^n\frac{\sqrt{\ell_{n+1}}}{2} i (\xt-i)  \Big[1 - \frac{n+1}{n}- \frac{(\xt+i)^{n+1}}{(\xt-i)^{n+1}} + \frac{n+1}{n} \frac{(\xt+i)^{n}}{(\xt-i)^{n}} \Big], \ \ \ n \geq 1,
\]
\[
     R_{n}(z) = \frac{z^{n+1} - 1}{z - 1}, \ \ n \geq 0, 
\]
\[
    \hat{R}_{n}(z) = \frac{1}{n(z-1)}\big[n (z^{n+1}-1) - (n+1) (z^{n} -1 )  \big], \ \ \  n \geq 1 
\]
and 
\[
    \Phi_{n-1}(z) = \frac{1}{n(z-1)^2}\big[n (z^{n+1}-1) - (n+1) (z^{n} -1 )  \big], \ \ \  n \geq 1.
\]

The corresponding probability measure $\mu$ is found to be  
\[
     d \mu(\zeta) =  \frac{(1-\zeta)(\zeta -1)}{4\pi i \zeta^2} d \zeta.
\]
Now we  derive the polynomials $R_n(s;z)$ and their orthogonality property given by Theorem \ref{Thm-BRS2016-2}.  Again, many information regarding the orthogonal polynomials on the unit circle  associated with this measure are well known. 

For example, the associated Verblunsky coefficients are
\[
      \alpha_{n-1} = - \frac{1}{n+1}, \quad n \geq 1. 
\]
Clearly the integral $\int_{\T}|\zeta - 1|^{-2} d \mu(\zeta)$ exists and that one can also easily verify that 
\[
       I(\mu) = \int_{\mathcal{C}} \frac{\zeta}{\zeta - 1} d \mu(\zeta)  = \frac{1}{2}.
\]
We also easily verify by induction that the sequence $\{\tau_{n}(s)\}$ generated by 
\[
	     \dsp \tau_{1}(s)=  \frac{I+is}{\overline{I} -is}\quad \mbox{and} \quad  \tau_{n+1}(s) = \frac{\tau_{n}(s)  + \overline{\alpha_{n-1}}}{1 + \tau_{n}(s) \alpha_{n-1}}, \quad n \geq 1,
\]
can be explicitly given by (see \cite{BracRangaSwami-2016})
\[
   \tau_{n+1}(s)  =  \frac{1 + i (n+1)(n+2)s}{1 - i (n+1)(n+2)s}, \quad n\geq 0. 
\]
With this we obtain  
\[
     c_n(s) = - \frac{2ns}{1 + (n^2-1)(n+1)^2 s^2} \quad   \mbox{and}  \quad d_{n+1}(s) = [1-\ell_{n}(s)]\ell_{n+1}(s), 
\]
for $n \geq 1$, where 
\[
   \ell_{n+1}(s) = \frac{n}{2(n+1)} \frac{1 + (n+1)^2 (n+2)^2 s^2}{1 + n (n+1)^2 (n+2)s^2}, \quad n \geq 0.
\]

Since, 
\[
      1 - \ell_{n+1}(s) = \frac{n+2}{2(n+1)} \frac{1 + n^2 (n+1)^2 s^2}{1 + n (n+1)^2 (n+2)s^2},  \quad n \geq 1, 
\]
we have 
\[
     \sum_{n =1}^{\infty}  \prod_{k=1}^{n} \frac{\ell_{k+1}(s)}{1 - \ell_{k+1}(s)} =  \frac{2}{1 + 4s^2}\sum_{n =1}^{\infty}  \Big[\frac{1}{(n+1)(n+2)} + (n+1)(n+2) s^2 \Big].
\]
The above series converges if $s = 0$ and diverges otherwise.  Hence, by Wall's criteria the positive chain sequence $\{d_{n+1}(0)\}$ is a multiple parameter positive chain sequence and $\{d_{n+1}(s)\}$ for $s \neq 0$ is a single parameter positive chain sequence. 

Now if the sequence of polynomials $\{P_{n}(s;\xt)\}$ are  given by $P_{0}(s; \xt) = 1$, $P_{1}(s; \xt) = \xt - c_1(s)$ and 
\[
     P_{n+1}(s; \xt) = [\xt - c_{n+1}(s)] P_{n}(s;\xt)  - d_{n+1}(s)  (\xt^2 + 1) P_{n-1}(s;\xt), \quad n \geq 1.
\]
then from Theorem \ref{Thm-PV-Orthonality-M}  
\[
     d \psi(\xt) = \frac{2}{\pi} \frac{1}{(\xt^2+1)^2} d\xt, \qquad \int_{-\infty}^{\infty} \xt\, d \psi(\xt) = 0 
\]
and
\[
      \frac{2}{\pi} \int_{-\infty}^{\infty} \xt^{k} \, \frac{P_{n}(s;\xt)}{(\xt^2+1)^{n}} \, \frac{1}{\xt^2+1}  d \xt \ =  - \ c_{1}(s) \mathfrak{p}_{n}(s)\, \delta_{n-1,k}, \quad 0 \leq k \leq n-1,
\] \\[0ex] 
where $\mathfrak{p}_n(s) = (1-\ell_{1}(s)) \cdots(1-\ell_{n}(s))$.

Since $\{d_{n+1}(0)\}_{n \geq 1} = \{1/4\}_{n \geq 1}$ is a multiple parameter positive chain sequence, from Theorem \ref{Thm-Special-R2Type-MF} we have for the polynomials $\{P_n\} = \{P_n(0;.)\}$ given by \eqref{Eq-Special-R2Type-RR} and \eqref{Eq-Example3} 
\[
      \frac{2}{\pi} \int_{-\infty}^{\infty} \xt^{k} \, \frac{P_{n}(\xt)}{(\xt^2+1)^{n}} \, \frac{1}{\xt^2+1}  d \xt = \frac{1}{2^n} \delta_{n,k}, \quad k=0,1, \ldots, n.
\]

\vspace{4ex}

\noindent {\bf Example 4}. Here we consider the $R_{II}$ type recurrence \eqref{Eq-Special-R2Type-RR} with  $\{c_n\}_{n\geq 1}$ and $\{d_{n+1}\}_{n\geq 1}$ given by
\begin{equation} \label{Eq-Example4}
 \begin{array}l
  \dsp c_n = \frac{\eta}{\lambda+n}, \quad d_{n+1} = \frac{1}{4} \frac{n(2\lambda+n+1)}{(\lambda+n)(\lambda+n+1)}, \ \ n\geq 1,
 \end{array}
\end{equation}
where \ $\lambda, \eta \in \mathbb{R}$  and  $\lambda > -1$.

Observe that for the minimal parameter sequence of the positive chain sequence $\{d_{n+1}\}_{n\geq 1}$ we have 
\begin{equation*} \label{Eq-Special-ParamSeq}
   \quad \ell_{n+1} = \frac{n}{2(\lambda+n+1)}, \quad \ n \geq 0.
\end{equation*}
The sequence $\{d_{n+1}\}_{n=1}^{\infty}$ is a single parameter chain sequence for $-1/2 \geq \lambda > -1$ and it is a multiple parameter sequence otherwise. When  $\lambda > -1/2$ the maximal parameter sequence $\{M_{n+1}\}_{n=0}^{\infty}$ of $\{d_{n+1}\}_{n=1}^{\infty}$ is such that 
\[
      M_{n+1} = \frac{1}{2}\frac{2\lambda +n +1}{\lambda+n+1},  \quad n \geq 0.
\]

The polynomials $R_n$ obtained from the above sequences $\{c_n\}$ and $\{d_{n+1}\}$, together with the recurrence formula \eqref{Eq-TTRR-R} have already been studied in \cite{BracRangaSwami-2016}, and we have 
\begin{equation*} \label{Eq-HypergeometricRn}
      R_n(z) = \frac{(2\lambda+2)_n}{(\lambda+1)_n} \, _2F_1(-n,b+1;\,b+\overline{b}+2;\,1-z),  \quad n \geq 1,
\end{equation*}
where  $b = \lambda + i \eta$.  Here, $ _2F_1(-n,b;\,c;\,z)$ represents a Gaussian hypergeometric polynomial of degree $n$ in $z$. More about the properties of such polynomials and about general Gaussian hypergeometric functions $ _2F_1(a,b;\,c;\,z)$, we cite \cite{Andrews-Book}.

From  \eqref{Eq-P-to-R} we then have 
\[
     P_{n}(\xt) = \frac{(2\lambda+2)_n}{(\lambda+1)_n} \frac{(\xt-i)^n}{2^n}\, _2F_1(-n,b+1;\,b+\overline{b}+2;\,-2i/(\xt-i)), \quad n \geq 1. 
\]

The nontrivial probability measure $\mu$ that follows from  Theorem \ref{Thm-Orthogonality1-UC} is found to be 
\[
    d \mu(e^{i\theta})= \frac{2^{b+\overline{b}+2}|\Gamma(b+2)|^2 }{2\pi\, \Gamma(b+\overline{b}+3)} e^{(\pi-\theta)\Im(b)} [\sin^{2}(\theta/2)]^{\Re(b)+1} d\theta . \qquad
\]
The associated monic orthogonal polynomials on the unit circle and Verblunsky coefficients  are (see \cite{SR-PAMS2010})
\[
    \begin{array}l 
          \dsp \Phi_n(z) = \frac{(b+\overline{b}+3)_n}{(b+2)_n} \, _2F_1(-n,b+2;\,b+\overline{b}+3;\,1-z),  \\[3ex]
          \dsp \alpha_{n-1} = - \frac{(b+1)_{n}}{(\overline{b}+2)_{n}},
    \end{array}   n \geq 1.
\]
When $\lambda > -1/2$ the sequence $\{d_{n+1}\}_{n\geq 1}$ is a multiple parameter positive chain sequence and its maximal parameter sequence $\{M_{n+1}\}_{n\geq 0}$ is such that
\[
     M_{n+1} = \frac{1}{2}\, \frac{2\lambda+n +1}{\lambda+n+1}, \quad n \geq 0.
\]

Hence, for $\lambda > -1/2$ we can consider the probability measure 
\[
     d \nu(e^{i\theta}) = \const  \frac{1}{|\zeta-1|^2} d \mu(e^{i\theta}) = \frac{2^{b+\overline{b}}|\Gamma(b+1)|^2 }{2\pi\, \Gamma(b+\overline{b}+1)} e^{(\pi-\theta)\Im(b)} [\sin^{2}(\theta/2)]^{\Re(b)} d\theta,  
\]
and from Theorem \ref{Thm-Special-R2Type-MF} the polynomials $P_{n}$ given by \eqref{Eq-Special-R2Type-RR} and \eqref{Eq-Example4}  satisfy the orthogonality   
\[
      \int_{-\infty}^{\infty} \xt^{j} \, \frac{P_{n}(\xt)}{(\xt^2+1)^{n}} \,  d \varphi(\xt) = \gamma_n \delta_{n,k}, \quad j=0,1, \ldots, n,
\]
where 
\[
      d \varphi(\xt) = - d \nu\big(\frac{x+i}{x-i}\big) = \frac{2^{b+\overline{b}+1}|\Gamma(b+1)|^2 }{2\pi\, \Gamma(b+\overline{b}+1)}\,  e^{[\pi -2\,arccot(\xt)]\eta} (\xt^2 + 1)^{-\lambda-1} d\xt,
\]
$\gamma_{0} = \int_{-\infty}^{\infty} d\varphi(\xt) = \int_{\T} d \nu(\zeta) = 1$ and $\gamma_{n} = (1- M_{n})\gamma_{n-1}$, $n \geq 1$. 

\vspace{2ex}

%%%%%%%%%%%%%%%%%%%%%%%%%%%%%%%%%%%%%%%%%%%%%%%%%%%%%%%%%%%%%%%

\end{document}